\DeclareRobustCommand{\rchi}{{\mathpalette\irchi\relax}}
\newcommand{\irchi}[2]{\raisebox{\depth}{$#1\chi$}}
\newtheorem{theorem}{Theorem}[section]
\newtheorem{cor}[theorem]{Corollary}
\newtheorem{prop}[theorem]{Proposition}
\newtheorem{lemma}[theorem]{Lemma}
\theoremstyle{definition}
\newtheorem*{theorem*}{Theorem}
\theoremstyle{remark}
\newtheorem{remark}[theorem]{Remark}
\newcommand{\Obsolete}[1]{
    }
\numberwithin{equation}{section}
\title
    [2D Euler in Logarithmically Refined Sobolev Spaces]
    {Ill-posedness of the 2D Euler Equations in a Logarithmically Refined Critical Sobolev Space}
\author{Elaine Cozzi}
\address{Oregon State University, Department of Mathematics}
\email{elaine.cozzi@oregonstate.edu}
\author{Nicholas Harrison}
\address{Oregon State University, Department of Mathematics}
\email{harrnich@oregonstate.edu}
\author{Zachary Radke}
\address{Oregon State University, Department of Mathematics}
\email{radkeza@oregonstate.edu}
\begin{document}

\begin{abstract}
    In \cite{bourgainli15}, Bourgain and Li establish strong ill-posedness of the 2D Euler equations for initial velocity in the critical Sobolev space $H^2(\mathbb{R}^2)$.  In this work, we extend the results of \cite{bourgainli15} by demonstrating strong ill-posedness in logarithmically regularized spaces which are strictly contained in $H^2(\mathbb{R}^2)$ and which contain $H^s(\mathbb{R}^2)$ for all $s>2$. These spaces are constructed via application of a fractional logarithmic derivative to the critical Sobolev norm. We show that if the power $\alpha$ of the logarithmic derivative satisfies $\alpha\leq 1/2$, then the 2D Euler equations are strongly ill-posed.
\end{abstract}
\maketitle
\section{Introduction}\label{Section Introduction}
The incompressible Euler equations on $\mathbb{R}^2$ in velocity form are given by
\begin{equation}\label{VelocityEquation}\tag{E}
\begin{cases}
    \partial_tu+u\cdot\nabla u = -\nabla p \quad &\text{on }\mathbb{R}^2\times(0,\infty),
    \\ \nabla\cdot u=0 \quad &\text{on }\mathbb{R}^2\times[0,\infty),
    \\ u|_{t=0}=u_0 \quad &\text{on }\mathbb{R}^2,    
\end{cases}    
\end{equation}
where $u:\mathbb{R}^2\times [0,\infty)\rightarrow \mathbb{R}^2$ denotes the velocity and $p:\mathbb{R}^2\times[0,\infty)\to\mathbb{R}$ denotes the internal scalar pressure. 
The 2D incompressible Euler equations can be recast in terms of the \textit{vorticity}, defined by $$\omega:=\nabla^\perp \cdot u=\partial_{x_2}u_1-\partial_{x_1}u_2.$$ One can show using the incompressibility condition that $(\ref{VelocityEquation})$ is equivalent to the 2D vorticity equation, given by
\begin{equation}\label{VorticityEquation}\tag{V} 
\begin{cases}
    \partial_t\omega+u\cdot\nabla\omega = 0 \quad &\text{on }\mathbb{R}^2\times(0,\infty),
    \\ u = \nabla^\perp(-\Delta)^{-1}\omega  &\text{on }\mathbb{R}^2\times[0,\infty),
    \\ \omega|_{t=0}=\omega_0 &\text{on }\mathbb{R}^2.
\end{cases}
\end{equation}
Here, the velocity can be obtained from the vorticity via the relation $u=K*\omega$, where $$K(x)= \frac{1}{2\pi}\frac{x^\perp}{|x|^2}$$ is the Biot-Savart kernel. We refer to convolution with $K$ as the Biot-Savart operator. Throughout this work, we also let $X:\mathbb{R}^2\times [0,\infty)\to\mathbb{R}^2$ be the particle trajectory map defined by 
\begin{equation}\label{Flow map definition}
    \frac{d}{dt}X^t(x)=u(X^t(x),t), \ \ X^0(x)=x,\ \ \ \ (x,t)\in\mathbb{R}^2\times[0,\infty).
\end{equation}

The question of well-posedness of $(\ref{VelocityEquation})$ in different functional settings has a long history.  Wolibner \cite{wolibner33} establishes the global well-posedness of $(\ref{VelocityEquation})$ in $C^{k,r}(\mathbb{R}^2)$, the space of $k$-times continuously differentiable functions with the $k^{\text{th}}$-order derivatives $r$-H\"older continuous, when $k\geq 1$ and $r\in(0,1)$. Kato and Ponce \cite{katoponce86} prove local well-posedness of $(\ref{VelocityEquation})$ in the Sobolev spaces $W^{s,p}(\mathbb{R}^2)$, with $s>2/p+1$ and $p\in(1,\infty)$.  One can obtain global in time solutions in these Sobolev spaces via the so-called BKM criterion \cite{bealekatomajda84}, with at most double exponential growth in time of the $W^{s,p}$-norm. In the lower regularity setting, Yudovich \cite{yudovich63} establishes existence of a unique weak solution to $(\ref{VorticityEquation})$ in $L^\infty_{loc}([0,\infty);L^1\cap L^\infty(\mathbb{R}^2))$. See also \cite{crippaseisspirito17}, \cite{bohunbouchutcrippa16} for results on existence of Lagrangian and weak solutions with vorticity in $L^1(\mathbb{R}^2)$. For well-posedness results in the Besov and Triebel-Lizorkin spaces, $B^{s}_{p,q}(\mathbb{R}^2)$ and $F^{s}_{p,q}(\mathbb{R}^2)$, we refer the reader to \cite{bahourichemindanchin11, chae02, chae03, chae04, vishik98, hwangpak23}. Notably, for these spaces, the condition $s>2/p+1$ is sufficient for global well-posedness, while, when $s=2/p+1$, well-posedness can be established in certain cases when the corresponding space embeds into $L^\infty(\mathbb{R}^2)$. 

In contrast to the well-posedness results mentioned above, several works in recent years have demonstrated strong ill-posedness of $(\ref{VelocityEquation})$ in various function spaces.  Here, strong ill-posedness in the Banach space $X$ refers to the existence of initial data $u_0\in X$ for which the unique (Yudovich) solution $u$ to $(\ref{VelocityEquation})$ does not belong to $L^\infty([0,t];X)$ for \textit{any} $t>0$. 
The seminal works of Bourgain and Li \cite{bourgainli15, bourgainli21} establish the strong ill-posedness of $(\ref{VelocityEquation})$ for $d=2$ and $d=3$ in $C^k(\mathbb{R}^d)$ with $k$ a positive integer and in $B^s_{p,q}(\mathbb{R}^d)$ with $s=d/p +1$, $p\in(1,\infty)$, and $q>1$.  The authors of \cite{elgindijeong17, jeongkim21} give simplified proofs of the results of \cite{bourgainli15, bourgainli21}, with \cite{elgindijeong17} providing a proof on the two-dimensional torus, while \cite{jeongkim21} also establishes ill-posedness in the critical Lorentz spaces. A similar construction to that used in \cite{bourgainli15} is applied in \cite{kwon21}, showing strong ill-posedness of the logarithmically modified Euler equations, where the vorticity is transported by a flow logarithmically smoother than the velocity given by the Biot-Savart law.    

In this work, we demonstrate ill-posedness of $(\ref{VelocityEquation})$ in logarithmically refined critical Sobolev spaces. Before stating our main result, we briefly discuss a few relevant function spaces (see Section \ref{Section Preliminaries and Notation} for further discussion of these spaces).  We first define a logarithmic derivative using the Fourier transform: we fix $\alpha\in \mathbb{R}$ and set
\begin{equation}\label{Logarithmic derivative}
    \log^\alpha(D+1)f := \int_{\mathbb{R}^2}\log^\alpha(|\xi|+1)\widehat{f}(\xi)e^{ix\cdot\xi}\ d\xi,
\end{equation} for a locally integrable function $f:\mathbb{R}^2\to\mathbb{R}$, where $\widehat{f}:\mathbb{R}^2\to\mathbb{R}$ denotes the Fourier transform of $f$. We can now define the logarithmic Sobolev seminorm 
\begin{equation}\label{Logarithmic Sobolev seminorm}
        |f|_{H^{s,\alpha}}^2:=\int_{\mathbb{R}^2} |\xi|^{2s}\log^{2\alpha}(|\xi|+1)|\widehat{f}(\xi)|^2d\xi,
\end{equation}
where $s\geq 0.$ Then the logarithmic Sobolev space $H^{s,\alpha}(\mathbb{R}^2)$ is given by $$H^{s,\alpha}(\mathbb{R}^2):=\{f\in L^1_{loc}(\mathbb{R}^2): \|f\|_{H^{s,\alpha}}^2:=\|f\|_{L^2}^2 +|f|_{H^{s,\alpha}}^2<\infty\}. $$ Of course, when $\alpha=0$, $H^{s,\alpha}(\mathbb{R}^2)$ reduces to the classical Sobolev space $H^{s}(\mathbb{R}^2)$.  Moreover, for any $\alpha>0$ and $\epsilon>0$, we have the (strict) embeddings $$H^{s+\varepsilon,0}(\mathbb{R}^2)\hookrightarrow H^{s,\alpha}(\mathbb{R}^2)\hookrightarrow H^{s,0}(\mathbb{R}^2).$$ Thus, when $\alpha>0$, the spaces $H^{2,\alpha}(\mathbb{R}^2)$ lie strictly between the supercritical scale, at which $(\ref{VelocityEquation})$ is well-posed, and the critical scale, at which $(\ref{VelocityEquation})$ is strongly ill-posed. Our main result states that if the extra regularity imposed by the $\alpha$-logarithmic derivative is to a low enough degree, then $(\ref{VelocityEquation})$ is strongly ill-posed in $H^{2,\alpha}(\mathbb{R}^2)$.
\begin{theorem}\label{main}
    Let $\alpha\in(0,\frac{1}{2}]$ and $\varepsilon>0$. Then there exists $u_0\in H^{2,\alpha}(\mathbb{R}^2)\cap C^\infty(\mathbb{R}^2)$ such that $\|u_0\|_{H^{2,\alpha}}<\varepsilon$ and the unique smooth solution $u$ to $(\ref{VelocityEquation})$ satisfies $$\limsup_{t\to 0^+}\|u(t)\|_{H^{2,\alpha}}=\infty.$$ 
\end{theorem}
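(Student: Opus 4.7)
The plan is to extend the Bourgain--Li multi-scale construction from \cite{bourgainli15} so as to carry along the extra logarithmic weight present in $H^{2,\alpha}$. I would take the initial vorticity to be a widely-separated superposition
\[
\omega_0(x) \;=\; \sum_{k=1}^\infty a_k\, \phi\!\bigl(\lambda_k^{-1}(x - x_k)\bigr)
\]
of rescaled copies of a fixed smooth, compactly supported, mean-zero radial profile $\phi$, with $\lambda_k\downarrow 0$ super-exponentially and centers $x_k$ satisfying $|x_k-x_j|\gg \lambda_k+\lambda_j$. A direct Fourier change of variables gives, for each bump $\phi_k$, $|\phi_k|_{H^{1,\alpha}}^2\simeq \log^{2\alpha}(\lambda_k^{-1})\,|\phi|_{\dot H^1}^2$ (because $\widehat{\phi_k}$ is essentially concentrated near frequency $\lambda_k^{-1}$), and combining this with near-orthogonality coming from spatial separation yields
\[
\|u_0\|_{H^{2,\alpha}}^2 \;\simeq\; \|\omega_0\|_{H^{1,\alpha}}^2 \;\simeq\; \sum_k a_k^2\, \log^{2\alpha}(\lambda_k^{-1}).
\]
Choosing for instance $a_k = c\, k^{-1}\log^{-\alpha}(\lambda_k^{-1})$ with $c$ small makes $\|u_0\|_{H^{2,\alpha}}<\varepsilon$, while the widely separated $C^\infty$ supports and the mean-zero condition guarantee $u_0\in C^\infty(\mathbb{R}^2)$.

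\textbf{Norm inflation.} The core analytic step is a short-time norm inflation for each bump. Adapting the Lagrangian/hyperbolic-flow analysis of \cite{bourgainli15} (with the simplifications of \cite{elgindijeong17}), I would produce a sequence of times $t_k\downarrow 0^+$ at which the contribution of the $k$th bump to $\|\omega(t_k)\|_{\dot H^1}$ is bounded below by $a_k N_k$ with an inflation factor $N_k\to\infty$ as $k\to\infty$. Because the evolved bump remains essentially spectrally localized near $|\xi|\sim \lambda_k^{-1}$ on the short interval $[0,t_k]$, this lower bound lifts directly to
\[
\|\omega(t_k)\|_{H^{1,\alpha}} \;\gtrsim\; a_k\, N_k\, \log^{\alpha}(\lambda_k^{-1}),
\]
which, by choosing $\lambda_k$ to decay sufficiently fast relative to $k$ (e.g.\ $\lambda_k=\exp(-e^{k})$), diverges as $k\to\infty$. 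Passing from vorticity back to velocity via the Biot--Savart law, together with the $L^2$ conservation of vorticity (to dispose of the non-critical $\|u\|_{L^2}$ term), then yields the claimed $\limsup\|u(t)\|_{H^{2,\alpha}}=\infty$.

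\textbf{Main obstacle.} The hardest step will be making the Bourgain--Li inflation mechanism quantitative enough, and logarithmically tracked, to beat the initial $\log^{\alpha}(\lambda_k^{-1})$ penalty precisely at the borderline $\alpha=\tfrac12$. This critical exponent is intrinsic: a Brezis--Gallouet--Wainger-type inequality suggests that for $\alpha>\tfrac12$ the space $H^{2,\alpha}$ embeds into a space of essentially Lipschitz velocities, which would enforce well-posedness and preclude any such ill-posedness mechanism. Two closely related technical issues must therefore be handled with care: (i) controlling cross-interactions, so that the velocity induced by all bumps $j\neq k$ acts as a near-Lipschitz perturbation on the support of the $k$th bump, quantified through a logarithmically refined BMO-type seminorm; and (ii) preserving the spectral localization of each bump under the short-time nonlinear evolution on $[0,t_k]$, so that the logarithmic multiplier may be treated essentially as the scalar $\log^{\alpha}(\lambda_k^{-1})$ throughout the inflation estimate. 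Together, these would make the logarithmic bookkeeping consistent with the Bourgain--Li Lagrangian argument all the way up to and including $\alpha=\tfrac12$.
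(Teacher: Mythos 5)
Your high-level skeleton (multi-scale construction, widely separated centers, control of cross-interactions, and final patching) matches the architecture of the paper, but the core inflation mechanism as you describe it has genuine gaps that would sink the argument.

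\textbf{Radial profiles cannot generate deformation.} You propose rescaled copies of a fixed ``mean-zero radial profile $\phi$.'' A radial vorticity produces a purely azimuthal Biot--Savart velocity, under which every circle about the center simply rotates rigidly; the particle-trajectory Jacobian stays bounded and there is no hyperbolic stretching. The Bourgain--Li mechanism requires an odd-odd vorticity configuration (bumps of alternating sign in the four quadrants around a point) precisely to create a hyperbolic stagnation point with large $|\nabla u|$. In the paper this is built into the profile $\eta$ and inherited by $f_M$; your radial ansatz cannot produce it, so the inflation factor $N_k$ that you invoke would not appear.

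\textbf{Spectral localization fails — and its failure is the whole point.} You assert that ``the evolved bump remains essentially spectrally localized near $|\xi|\sim\lambda_k^{-1}$ on $[0,t_k]$,'' and use this to upgrade a $\dot H^1$ lower bound to an $H^{1,\alpha}$ one. But $\|\omega(t)\|_{L^2}$ is conserved, so if the spectrum stayed localized near $\lambda_k^{-1}$, the $\dot H^1$ norm of the evolved vorticity could not grow. The inflation is exactly a transfer of Fourier mass to much higher frequencies; the paper's proof of Theorem 3.1 makes this explicit by showing that the propagated perturbation $p=\beta\circ\widetilde X^{-t_0}$ has substantial $L^2$ Fourier mass in the region $|\xi|\gtrsim kL$, i.e.\ a factor $L$ higher than the initial frequency $k$. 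Your premise contradicts the conclusion it is meant to support.

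\textbf{Missing the two-component construction.} Related to both points: a single rescaled bump per scale is not enough. The paper's local inflation (Theorem 3.1 together with Theorem 3.4) uses a two-stage initial datum: a deformation-generating piece $f_M$ (a whole band of dyadic scales summed with carefully chosen coefficients $k^{-2\alpha}$, not a single scale) plus a separately constructed highly oscillatory perturbation $\beta$ placed near where the deformation is large. The oscillatory perturbation is where the $\dot H^1$ growth actually lives; $f_M$ itself is only there to manufacture the hyperbolic flow. Your ansatz collapses these roles into one bump, and the quantitative bookkeeping (including the delicate $\log^\alpha$ interpolation inequality, Lemma 2.1, and the real-variable $H^{1,\alpha}$ characterization, equation (2.7), used to keep $\|f_M\|_{H^{1,\alpha}}$ small up to $\alpha=\tfrac12$) has no analogue in your sketch. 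Your diagnosis of the $\alpha=\tfrac12$ threshold is correct in spirit, but the mechanism you describe would not reach it.
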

We note that in the recent work \cite{harrisonradke25}, the second and third authors prove that $(\ref{VelocityEquation})$ is globally well-posed in $H^{2,\alpha}(\mathbb{R}^2)$ for $\alpha>\frac{1}{2}$;  thus, the degree of logarithmic regularity in Theorem \ref{main} is sharp.  

We briefly outline our strategy for proving Theorem \ref{main}, 
 which closely follows the method of large Lagrangian deformation used in the works \cite{bourgainli15, bourgainli21, kwon21}. First, in Section \ref{section Local Norm Inflation}, we construct initial vorticity which is small in the $H^{1,\alpha}$ norm, but which produces a large Lagrangian deformation in short time. We then perturb this initial data by a highly oscillatory function which is also small in $H^{1,\alpha}$-norm. We show that this perturbation, combined with the stretching from the particle trajectory map, is capable of drastically inflating the norm at a later time.  In Section \ref{Section Proof of main theorem}, we iterate this construction to build a countable collection of solutions satisfying norm inflation, and we patch these local solutions together in such a way that they have little pairwise interaction. The full velocity solution is then shown to blow up in $H^{2,\alpha}$-norm instantaneously. 

Our approach most significantly differs from the approaches of \cite{bourgainli15, bourgainli21, kwon21} in that we must establish existence of initial vorticity which is small in the $H^{1,\alpha}$-seminorm and which leads to norm inflation. To show that $|\omega_0|_{H^{1,\alpha}}$ can be made small, we utilize a real-variable Gagliardo-type equivalent seminorm (see (\ref{real variable definition for s=1})), as well as a logarithmic interpolation inequality (see Lemma \ref{interpolation lemma}).  The proof of $H^{1,\alpha}$-norm inflation requires delicate frequency analysis using the Fourier definition of the $H^{1,\alpha}$-seminorm, as given in (\ref{Logarithmic Sobolev seminorm}).

Finally, we remark that after suitable modification, many of our arguments apply to a more generally regularized critical Sobolev space; that is, where the logarithmic derivative is replaced with a generic function which grows slower than a power at infinity. These more general spaces are considered by the second and third authors in \cite{harrisonradke25}, where the authors specify a threshold for the growth rate of the Fourier multiplier, above which global well-posedness is guaranteed. In future work, we plan to address the ill-posedness question on the other side of this threshold. 

\section{Preliminaries and Notation}\label{Section Preliminaries and Notation}
\subsection{Notation} In what follows, we let $\log$ denote the base-two logarithm, and we utilize the notation $\log^\alpha(t)$ to denote $\left(\log(t)\right)^\alpha$ for $\alpha\in\mathbb{R}$, $t>0.$ We denote the open disc of radius $r>0$ around a point $x\in\mathbb{R}^2$ by $B_r(x). $ For a matrix $A$, $|A|_\infty$ denotes the largest absolute value of the entries of a matrix $A$.  Finally, we let $|\cdot|$ denote the standard Euclidean norm in $\mathbb{R}^2$; that is, for $x=(x_1,x_2)\in\mathbb{R}^2$, $|x|=\sqrt{x_1^2+x_2^2}$.

Throughout this work, we use $C>0$ to denote a constant which may change from one line to the next.  We may indicate that a constant depends on some quantity if the dependence on that quantity is important to the argument; for example, if $C$ depends on $\alpha$, we may utilize the notation $C_{\alpha}$.  Often, to simplify the presentation, we apply the notation $A \lesssim B$ to indicate that $A\leq C B$ for some constant $C>0$.      

\subsection{Sobolev Spaces} 
Let $k$ be a non-negative integer. The {\em Sobolev space} $W^{k,p}(\mathbb{R}^d)$ is the space of distributions $f$ in $L^p(\mathbb{R}^d)$ whose distributional derivatives of all orders up to and including $k$ are also in $L^p(\mathbb{R}^d)$.  We equip $W^{k,p}(\mathbb{R}^d)$ with the norm $$\|f\|_{W^{k,p}}=\|f\|_{L^p}+\sum_{|\sigma|=1}^k\|D^\sigma f\|_{L^p}.$$
When $p=2$, we denote the $k$th order Sobolev space $W^{k,2}(\mathbb{R}^d)$ by $H^k(\mathbb{R}^d)$.

When $p\in(1,\infty)$, one can equivalently define $W^{k,p}(\mathbb{R}^d)$ to be the set of distributions $f$ in $L^p(\mathbb{R}^d)$ such that $(-\Delta)^{k/2}f\in L^p(\mathbb{R}^d)$, where $(-\Delta)^{k/2}$ is the fractional Laplacian, with norm $$\|f\|_{W^{k,p}}=\|(Id-\Delta)^{k/2}f\|_{L^p}.$$ This definition allows for non-integral values of $k$, enabling us to make sense of Sobolev spaces $W^{k,p}(\mathbb{R}^d)$ for any differentiability index $k\in\mathbb{R}$. When $p=2$ and $k\in\mathbb{R}$, we can apply Plancherel's identity and obtain $$\|f\|_{H^k} \simeq \left(\int_{\mathbb{R}^d}(1+|\xi|^2)^{k}|\widehat{f}(\xi)|^2\, d\xi\right)^{1/2},$$ where $\simeq$ denotes equivalence of norms and $\widehat{f}:\mathbb{R}^d\to\mathbb{R} $ denotes the Fourier transform of $f$ defined by $$\widehat{f}(\xi)=\int_{\mathbb{R}^d}f(x)e^{-ix\cdot\xi}\, d\xi.$$ 

In what follows, we make use of the Sobolev Embedding Theorem, which states that if $1< p\leq q< \infty$, $p<d$, and $s\geq t\geq0$, and if $s,p,t,q$ satisfy $$\frac{1}{p}-\frac{s}{d}=\frac{1}{q}-\frac{t}{d},$$ then there exists a constant $C>0$ such that $\|f\|_{W^{t,q}}\leq C\|f\|_{W^{s,p}}$ for all $f\in W^{s,p}(\mathbb{R}^d)$. 

We also recall the following interpolation inequality: if $p\in[1,\infty]$ is fixed, when $s_1>s>s_2\geq 0$ and $\theta\in(0,1)$ satisfy $s = \theta s_1 +(1-\theta)s_2,$ there exists $C>0$ such that $$\|f\|_{W^{s,p}}\leq C\|f\|_{W^{s_1,p}}^\theta\|f\|_{W^{s_2,p}}^{1-\theta}$$ for all $f\in W^{s_1,p}(\mathbb{R}^d)$. 

\subsection{Logarithmic Sobolev Spaces} For $\alpha\in\mathbb{R}$, we define the {\em $\alpha$-logarithmic derivative} of a distribution $f$ by 
$$\log^\alpha(D+1)f(x) := \int_{\mathbb{R}^2}\log^\alpha(|\xi|+1)\widehat{f}(\xi)e^{ix\cdot\xi}\ d\xi.$$
For $p\in (1,\infty),s\geq 0$, and $\alpha\geq0$, we define the \textit{logarithmic Sobolev space} $W^{s,p,\alpha}(\mathbb{R}^d)$ to be the set of distributions $f\in L^p(\mathbb{R}^d)$ such that $(-\Delta)^{s/2}\log^{\alpha}(D+1)f$ belongs to $L^p(\mathbb{R}^d)$. We define the seminorm $$|f|_{W^{s,p,\alpha}}:=\|(-\Delta)^{s/2}\log^{\alpha}(D+1)f\|_{L^p}.$$
Then the norm on $W^{s,p,\alpha}(\mathbb{R}^d)$ is defined as $$\|f\|_{W^{s,p,\alpha}}:=\|f\|_{W^{s,p}}+|f|_{W^{s,p,\alpha}}.$$ 
When $p=2$, we set $H^{s,\alpha}(\mathbb{R}^d):=W^{s,2,\alpha}(\mathbb{R}^d).$ Note that by Plancherel's identity, $\| f \|_{H^{s,\alpha}} \simeq \|f\|_{H^s}+|f|_{H^{s,\alpha}}$, with the seminorm $|f|_{H^{s,\alpha}}$ defined as in (\ref{Logarithmic Sobolev seminorm}).

For simplicity, in this work we restrict our attention to the $L^2$-based spaces $H^{s,\alpha}(\mathbb{R}^2)$, though we expect Theorem \ref{main} to hold for initial velocity in $W^{s,p,\alpha}(\mathbb{R}^2)$ with $s=2/p+1$, $p\in (1,\infty)$, and $\alpha\in(0,1-\frac{1}{p}].$

In the proof of Theorem \ref{main}, we make use of several important properties of logarithmic Sobolev spaces.  In Lemma \ref{interpolation lemma} below, we establish the first of these properties, a key logarithmic interpolation inequality which relates logarithmic Sobolev spaces to the classical Sobolev spaces. Below, we let $|f|_{H^s}$ denote the homogeneous Sobolev norm; that is, $$|f|_{H^s}^2=\int_{\mathbb{R}^d}|\xi|^{2s}|\widehat{f}(\xi)|^2\, d\xi.$$
\begin{lemma}\label{interpolation lemma}
    Let $\alpha\in(0,1)$ and $s>t\geq 0$. Then there exists $C>0$, depending on $\alpha$ and $s-t$, such that for any $f\in H^s(\mathbb{R}^d)$, 
    \begin{equation}\label{log sobolev interpolation inequality}
    |f|_{H^{t,\alpha}}\leq C|f|_{H^{t}}\log^{\alpha }\left(\frac{\|f\|_{H^s}}{|f|_{H^t}}\right).
\end{equation}
Moreover, if $f$ satisfies $$0<|f|_{H^{t}}\leq a, \quad \|f\|_{H^{s}}\leq b,$$ then
\begin{equation*}|f|_{H^{t,\alpha}}\leq Ca\log^\alpha \left(\frac{b}{a}+1\right).
\end{equation*}
\end{lemma}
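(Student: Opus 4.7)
The plan is to establish Lemma~\ref{interpolation lemma} by a Fourier-side frequency split of the defining integral for $|f|_{H^{t,\alpha}}^2$ at a threshold $R$ chosen to balance low- and high-frequency contributions. On $\{|\xi|\leq R\}$ the factor $\log^{2\alpha}(|\xi|+1)$ is bounded by $\log^{2\alpha}(R+1)$ and pulled outside, controlling that piece by $|f|_{H^t}^2$; on $\{|\xi|>R\}$ the factor $|\xi|^{2t}$ is traded for $|\xi|^{2s}$ at the cost of an algebraic weight $|\xi|^{-2(s-t)}$, controlling that piece by $\|f\|_{H^s}^2$. Optimizing $R$ then produces the claimed logarithmic bound.

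Concretely, for $R\geq 1$ I would decompose
\begin{equation*}
|f|_{H^{t,\alpha}}^2=\int_{|\xi|\leq R}|\xi|^{2t}\log^{2\alpha}(|\xi|+1)|\widehat{f}|^2\,d\xi+\int_{|\xi|>R}|\xi|^{2t}\log^{2\alpha}(|\xi|+1)|\widehat{f}|^2\,d\xi=:I_1+I_2
\end{equation*}
and bound $I_1\leq \log^{2\alpha}(R+1)\,|f|_{H^t}^2$ directly. For $I_2$, I would set $h(r):=r^{-2(s-t)}\log^{2\alpha}(r+1)$ and verify by a direct computation that $h$ is decreasing on $[R_0,\infty)$ for some $R_0=R_0(s-t,\alpha)>2$, since the polynomial decay dominates the logarithmic growth at large scales. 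For $R\geq R_0$ this yields
\begin{equation*}
I_2\leq h(R)\int_{|\xi|>R}|\xi|^{2s}|\widehat{f}|^2\,d\xi\leq \frac{\log^{2\alpha}(R+1)}{R^{2(s-t)}}\|f\|_{H^s}^2.
\end{equation*}
Balancing suggests $R_\star:=(b/a)^{1/(s-t)}$ for the quantified form and $R_\star:=(\|f\|_{H^s}/|f|_{H^t})^{1/(s-t)}$ for the unquantified form. Whenever $R_\star\geq R_0$, both pieces are $\lesssim a^2\log^{2\alpha}(R_\star+1)$, and the elementary bound $\log(R_\star+1)\lesssim_{s-t}\log(b/a+1)$ (valid since $b/a$ is then bounded below) immediately yields both displayed inequalities.

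The main technical nuisance is the degenerate regime $R_\star<R_0$, where naive balancing fails. There I would set $R=R_0$ and absorb the resulting $(s-t,\alpha)$-dependent factors into $C$, obtaining the crude bound $|f|_{H^{t,\alpha}}\lesssim_{s-t,\alpha}\max(a,b)$. When $a\leq b$ the log factor satisfies $\log^\alpha(b/a+1)\geq \log^\alpha 2=1$, so the crude bound already suffices. When $a>b$, I would use the pointwise inequality $(1+|\xi|^2)^s\geq |\xi|^{2t}$ (valid for $s\geq t\geq 0$) to conclude $|f|_{H^t}\leq\|f\|_{H^s}\leq b$, tightening the crude bound to $|f|_{H^{t,\alpha}}\lesssim b$; the elementary fact $\log^\alpha(1+x)\geq x$ on $[0,1]$ for $\alpha\in(0,1]$ (a consequence of $\log_2(1+x)\geq x$ on $[0,1]$ combined with $x^\alpha\geq x$) then gives $b\leq a\log^\alpha(b/a+1)$, completing the proof.
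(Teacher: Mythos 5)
Your proof is correct and takes a genuinely different route from the paper's. The paper establishes the inequality in one stroke via Jensen's inequality: it introduces the probability measure $d\mu(\xi)=|f|_{H^t}^{-2}|\xi|^{2t}|\widehat{f}(\xi)|^2\,d\xi$, rewrites $\log^{2\alpha}(|\xi|+1)$ as $(2s-2t)^{-2\alpha}\log^{2\alpha}\big((1+|\xi|)^{2(s-t)}\big)$, and pulls the concave function $\log^{2\alpha}$ through the integral against $d\mu$, with the quantified form then following from monotonicity of $x\mapsto x\log^\alpha(1/x+1)$. Your frequency--threshold split is a more elementary and hands-on argument: you get the same conclusion by bounding each piece directly and optimizing $R$, at the cost of a case analysis in the regime where $\|f\|_{H^s}/|f|_{H^t}$ is small. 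The Jensen route is shorter and avoids the case split; your route has the advantage of not invoking concavity of $\log^{2\alpha}$ on $[1,\infty)$, which actually only holds for $\alpha\leq 1/2$ (for $\alpha\in(1/2,1)$ the function is convex near $1$, so the paper's Jensen step would need a small additional regularization near the bottom of the range). Two small points to tighten: (1) in the degenerate case $R_\star<R_0$ with $a\leq b$, the observation $\log^\alpha(b/a+1)\geq 1$ alone only shows the target right-hand side is $\geq a$, not that it dominates the crude bound $\lesssim b$; you need to invoke the constraint $b/a<R_0^{\,s-t}$ explicitly to conclude $b\lesssim_{s-t}a$ before the crude bound closes. (2) Your balanced bound naturally produces $\log^\alpha(\|f\|_{H^s}/|f|_{H^t}+1)$, not the bare $\log^\alpha(\|f\|_{H^s}/|f|_{H^t})$; the paper's first display has the same gap (the ``$+1$''--free form actually fails when $\|f\|_{H^s}/|f|_{H^t}\downarrow 1$, e.g.\ with $\widehat{f}$ concentrating near the origin when $t=0$), and in both proofs it is the $+1$ form that is genuinely established and subsequently used.
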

\begin{proof}
Let $d\mu$ denote the probability measure $$d\mu(\xi):=|f|_{H^t}^{-2}|\xi|^{2t}|\widehat{f}(\xi)|^2d\xi.$$ Then by Jensen's inequality,
 \Obsolete{   \begin{equation}\label{Jensens}
    \begin{split}
    |f|_{H^{t,\alpha}}^2&=(2s-2t)^{-{2\alpha}}|f|_{H^t}^2\int_{\mathbb{R}^2}\log^{2\alpha}\left((1+|\xi|)^{2(s-t)}\right)\,d\mu(\xi)
    \\ &\leq (2s-2t)^{-{2\alpha}}|f|_{H^t}^2\log^{2\alpha}\left(\int_{\mathbb{R}^2}(1+|\xi|)^{2(s-t)}\,d\mu(\xi)+1\right)
    \\ &\leq(2s-2t)^{-2\alpha}|f|_{H^t}^2\log^{2\alpha}\left(\frac{\int(1+|\xi|)^{2s}|\widehat{f}(\xi)|^2d\xi}{|f|_{H^t}^2}+1\right).
    \end{split}
    \end{equation}
    \textbf{Elaine's thoughts - I am not sure the $+1$ in the second line above is needed.  See my attempt at a revision below.}
Then by Jensen's inequality  } 
    \begin{equation}\label{Jensens}
    \begin{split}
    |f|_{H^{t,\alpha}}^2&=(2s-2t)^{-{2\alpha}}|f|_{H^t}^2\int_{\mathbb{R}^d}\log^{2\alpha}\left((1+|\xi|)^{2(s-t)}\right)\,d\mu(\xi)
    \\ &\leq (2s-2t)^{-{2\alpha}}|f|_{H^t}^2\log^{2\alpha}\left(\int_{\mathbb{R}^d}(1+|\xi|)^{2(s-t)}\,d\mu(\xi)\right)
    \\ &\leq (2s-2t)^{-2\alpha}|f|_{H^t}^2\log^{2\alpha}\left(\frac{\int_{\mathbb{R}^d}(1+|\xi|)^{2s}|\widehat{f}(\xi)|^2 d\xi}{|f|_{H^t}^2}\right).
    \end{split}
    \end{equation}  
Applying the series of inequalities  
\begin{equation*}\label{ineq1}
(1+|\xi|)^{2s}=(1+2|\xi|+|\xi|^2)^{s} \leq  (1+2+\frac{1}{2}|\xi|^2+|\xi|^2)^{s}\leq 3^{s}(1+|\xi|^2)^{s}
\end{equation*}
to the integrand on right hand side of (\ref{Jensens}) and taking the square root of both sides yields the desired interpolation inequality,  
\begin{equation*}
    |f|_{H^{t,\alpha}}\leq C|f|_{H^{t}}\log^{\alpha }\left(\frac{\|f\|_{H^s}}{|f|_{H^t}}\right).
\end{equation*}
This completes the proof of (\ref{log sobolev interpolation inequality}).  

As (\ref{log sobolev interpolation inequality}) implies that $|f|_{H^{t,\alpha}}\leq C|f|_{H^{t}}\log^{\alpha }\left(\frac{\|f\|_{H^s}}{|f|_{H^t}}+1\right)$, to complete the proof of Lemma \ref{interpolation lemma}, we observe that whenever $\alpha\in (0,1),$ the function $h:(0,\infty)\to [0,\infty)$ defined by $$h(x)=x\log^{\alpha}\left(\frac{1}{x}+1\right)$$
is nondecreasing.  
\Obsolete{Differentiating $h$ gives
\begin{equation*}
\begin{split}
&h'(x) = \log^{\alpha}\left( \frac{1}{x}+1\right) + x\alpha \log^{\alpha-1}\left( \frac{1}{x}+1\right)\left(\frac{1}{1+x} - \frac{1}{x}\right)\\
&\qquad = \log^{\alpha-1}\left( \frac{1}{x}+1\right)\left(\log\left( \frac{1}{x}+1\right) - \frac{\alpha}{1+x} \right).
\end{split}
\end{equation*}
Since $\log^{\alpha-1}\left( \frac{1}{x}+1\right)>0$ on $(0,\infty)$, it suffices to prove that \begin{equation}\label{positiveder}
G(x) = \log\left( \frac{1}{x}+1\right) - \frac{\alpha}{1+x} >0
\end{equation}on $(0,\infty)$.  

To prove (\ref{positiveder}), note first that \begin{equation}\label{Glimits}
\begin{split}
&\lim_{x\rightarrow 0+} G(x) = +\infty, \qquad \lim_{x\rightarrow +\infty} G(x) = 0.
\end{split}
\end{equation}
Moreover, one has
\begin{equation*}
\begin{split}
& G'(x) = \frac{1}{1+x} - \frac{1}{x} + \frac{\alpha}{(1+x)^2} = \frac{x(1+x) - (1+x)^2 +\alpha x }{x(1+x)^2}=\frac{x(\alpha-1) -1}{x(1+x)^2} < 0,
\end{split}
\end{equation*}
where we used $x(\alpha-1)<0$ to obtain the last inequality.  As $G$ is continuous and decreasing on $(0,\infty)$ and satisfies (\ref{Glimits}), it follows that $G>0$ on $(0,\infty)$.  Thus (\ref{positiveder}) holds.  We conclude that $h'>0$ and $h$ is increasing, completing the proof of Lemma \ref{interpolation lemma}. } 
\end{proof}

In the proof of Theorem \ref{main}, we also make use of the real-variable definition of the $H^{s,\alpha}$- norm. We refer to Theorem 1.4 of \cite{bruenguyen19} for the equivalence 
\begin{equation}\label{logarithmic sobolev real variable definition}
    \|f\|_{H^{s,\alpha}}\simeq \|f\|_{H^s}+\int_{\mathbb{R}^d}\int_{B_{1/2}(x)}\frac{|(-\Delta)^{s/2}f(x)-(-\Delta)^{s/2}f(y)|^2}{|x-y|^{d}\log^{1-2\alpha}\left(|x-y|^{-1}\right)}\, dy\, dx.
\end{equation}
 This equivalent representation becomes particularly useful when $s=1$ and $d=2$, as one can verify using (\ref{logarithmic sobolev real variable definition}) that
 \begin{equation}\label{real variable definition for s=1}\|f\|_{H^{1,\alpha}}\simeq\|f\|_{H^1}+\int_{\mathbb{R}^2}\int_{B_{1/2}(x)}\frac{|\nabla f(x)-\nabla f(y)|^2}{|x-y|^2\log^{1-2\alpha}\left(|x-y|^{-1}\right)}\, dy\, dx.
 \end{equation}

\subsection{Oscillatory Integral Estimate} As mentioned in Section \ref{Section Introduction}, to show ill-posedness in $H^{2,\alpha}(\mathbb{R}^2)$, we construct an initial vorticity which produces a large Lagrangian deformation in short time, and we perturb this initial vorticity by a highly oscillatory function.  To prove estimates on the composition of the perturbation with the particle trajectory map, we make use of Lemma \ref{nonstationary phase lemma} below.  One can find similar types of bounds on oscillatory integrals in, for example, \cite{stein93}.  We provide a detailed proof of Lemma \ref{nonstationary phase lemma}, as we must carefully track how the bounds on the oscillatory integral depend on its phase.    

\begin{lemma}\label{nonstationary phase lemma}
    Let $R>0$, and assume that $f:\mathbb{R}^2\to\mathbb{R}$ is smooth and supported in $B_R(0)$. Also assume $\phi:\mathbb{R}^2\to\mathbb{R}$ is smooth, and suppose that for every $x\in supp (f)$, 
    \begin{equation}\label{gradphi}
    |\nabla\phi(x)|_\infty\geq \lambda>0.
    \end{equation}
    Then $$\left|\int_{\mathbb{R}^2}f(x)e^{-i\phi(x)}\,dx\right|\leq C_N\|\nabla\phi\|^N_{C^N}\lambda^{-2N}\|f\|_{W^{N,1}}\left(1+\frac{R\|\nabla\phi\|_{C^1}}{\lambda}\right)^{N+4} ,$$ for a constant $C_N$ which remains bounded as long as $\|\nabla \phi\|_{C^N}$ remains bounded.  
\end{lemma}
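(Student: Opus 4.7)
The plan is a standard nonstationary-phase integration-by-parts argument, with the principal work being the careful bookkeeping of constants. Since the max-norm bound $|\nabla\phi|_\infty \geq \lambda$ forces the Euclidean bound $|\nabla\phi| \geq \lambda$ on $\operatorname{supp}(f)$, the vector field $a := \nabla\phi/|\nabla\phi|^2$ is smooth on a neighborhood of $\operatorname{supp}(f)$, and the first-order operator
$$L g := i\, a \cdot \nabla g$$
satisfies $L e^{-i\phi} = e^{-i\phi}$ there. Its formal $L^2$-adjoint is $L^* g = -i\,\nabla\cdot(g a)$.

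First I would integrate by parts $N$ times (the boundary terms vanish by the compact support of $f$) to obtain
$$\int_{\mathbb{R}^2} f(x)e^{-i\phi(x)}\,dx = \int_{\mathbb{R}^2} (L^*)^N f(x)\, e^{-i\phi(x)}\,dx,$$
which gives $\bigl|\int f e^{-i\phi}\bigr| \leq \|(L^*)^N f\|_{L^1}$. I would then expand $(L^*)^N f$ via iterated Leibniz into a finite linear combination of terms $c_\beta(x)\, D^\alpha f(x)$, where $|\alpha| = k \in \{0,\ldots,N\}$ and each coefficient $c_\beta$ is a polynomial in $a$ and its derivatives of total derivative-order $N-k$ with at most $N$ factors. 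The heart of the calculation is a pointwise bound on $c_\beta$: by iterated application of the quotient rule to $\nabla\phi/|\nabla\phi|^2$ together with the lower bound $|\nabla\phi|\geq\lambda$ on $\operatorname{supp}(f)$, one gets estimates of the schematic form $|D^j a(x)| \lesssim_j \|\nabla\phi\|_{C^{j+1}}^{j+1}\lambda^{-2(j+1)}$, which combine to yield a pointwise bound $|c_\beta(x)| \leq C_N\|\nabla\phi\|_{C^N}^N\lambda^{-2N}$ on $\operatorname{supp}(f)$, with $C_N$ remaining bounded so long as $\|\nabla\phi\|_{C^N}$ does.

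To reassemble the sum $\sum_{|\alpha|\leq N}\|D^\alpha f\|_{L^1}$ into a single factor $\|f\|_{W^{N,1}}$, I would invoke the compact support of $f$ in $B_R(0)$ (which is inherited by every $D^\alpha f$) together with the iterated Poincar\'e-type inequality $\|g\|_{L^1(B_R)} \leq C R\,\|\nabla g\|_{L^1(B_R)}$ for $g$ supported in $B_R(0)$, which dominates the lower-order derivatives $\|D^\alpha f\|_{L^1}$ by $R^{N-|\alpha|}\|f\|_{W^{N,1}}$. Each such conversion produces a geometric factor of $R\|\nabla\phi\|_{C^1}/\lambda$, and summing these contributions (together with the $L^\infty$-to-$L^1$ conversions on coefficients of the form $D^j a$ with $j\geq 1$, estimated by Sobolev embedding in $\mathbb{R}^2$ which requires two additional derivatives of cushion) produces the claimed multiplicative factor $\bigl(1+R\|\nabla\phi\|_{C^1}/\lambda\bigr)^{N+4}$.

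The main obstacle is not any single analytic step but the combinatorial bookkeeping: tracking, across every term of the Leibniz expansion, exactly which powers of $\|\nabla\phi\|_{C^N}$, $\lambda^{-1}$, and $R$ appear, and verifying that they assemble into the stated product rather than a strictly larger one. As the paragraph preceding the lemma emphasizes, this careful dependence on the phase is what distinguishes the statement from standard references like \cite{stein93}, so the proof would likely organize the dependencies via an induction on $N$ (or, equivalently, on the number of derivatives transferred from $f$ onto $a$) to prevent hidden subexponential growth.
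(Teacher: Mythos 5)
Your high-level strategy—integrating by parts $N$ times globally against the operator $L=ia\cdot\nabla$ with $a=\nabla\phi/|\nabla\phi|^2$—is a genuine alternative to the paper's argument. The paper instead covers $\mathrm{supp}(f)$ by $\mathcal{O}\bigl((1+R\|\nabla\phi\|_{C^1}/\lambda)^2\bigr)$ balls of radius $r=\lambda/(2\|\nabla\phi\|_{C^1})$, builds a partition of unity $\{g_k\}$, uses the Mean Value Theorem to select on each ball a \emph{single} coordinate direction $x_j$ with $|\partial_{x_j}\phi|\geq\lambda/2$, and integrates by parts one direction at a time; the polynomial factor $(1+R\|\nabla\phi\|_{C^1}/\lambda)^{N+4}$ enters \emph{precisely} through the cardinality $n$ of the cover and through the bounds $\|g_k\|_{C^m}\lesssim (n/r)^m$. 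Nothing in your global approach produces a factor of $R$, and it should not: if carried out with the correct coefficient estimates your route actually yields the cleaner and \emph{stronger} inequality $\bigl|\int f e^{-i\phi}\bigr|\leq C_N\|\nabla\phi\|_{C^N}^N\lambda^{-2N}\|f\|_{W^{N,1}}$, with no $R$-dependent factor at all. Trying to manufacture that factor is a symptom that something in your bookkeeping is off.

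Indeed, there are two concrete problems. First, your schematic bound $|D^ja|\lesssim_j\|\nabla\phi\|_{C^{j+1}}^{j+1}\lambda^{-2(j+1)}$ does \emph{not} combine to give the coefficient bound $|c_\beta|\leq C_N\|\nabla\phi\|_{C^N}^N\lambda^{-2N}$ you assert: multiplying $N$ such factors with $\sum j_i=N-|\alpha|=:s$ yields $\|\nabla\phi\|_{C^N}^{N+s}\lambda^{-2(N+s)}$, which exceeds the target by $(\|\nabla\phi\|_{C^N}/\lambda^2)^s$, a ratio that need not be $\leq 1$. The fix is a sharper estimate that exploits homogeneity: $a=\psi(\nabla\phi)$ with $\psi(v)=v/|v|^2$, and $D^l\psi$ is homogeneous of degree $-(l+1)$, so $|D^l\psi(v)|\lesssim_l|v|^{-(l+1)}$; then Fa\`a di Bruno gives $|D^ja|\lesssim_j\lambda^{-(j+1)}\|\nabla\phi\|_{C^{j+1}}^j$. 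With this, the product of $N$ factors with $\sum j_i=s$ is bounded by $\lambda^{-(N+s)}\|\nabla\phi\|_{C^N}^s\leq\lambda^{-2N}\|\nabla\phi\|_{C^N}^N$ using $\|\nabla\phi\|_{C^N}\geq\|\nabla\phi\|_\infty\geq\lambda$, and the argument closes. Second, the Poincar\'e and Sobolev-embedding steps you append are both unnecessary and do not do what you claim: $\|f\|_{W^{N,1}}=\sum_{|\alpha|\leq N}\|D^\alpha f\|_{L^1}$ already contains every lower-order derivative, so there is nothing to "reassemble," and since the coefficients $c_\beta$ were bounded pointwise, the pairing $\|c_\beta D^\alpha f\|_{L^1}\leq\|c_\beta\|_{L^\infty}\|D^\alpha f\|_{L^1}$ requires no embedding whatsoever. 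These two steps are an artifact of trying to reproduce a factor that your method never generates; remove them.
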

\begin{proof}
    Set $$r=\frac{\lambda}{2\|\nabla\phi\|_{C^1}}.$$ It follows from (\ref{gradphi}) and the Mean Value Theorem that for each $x\in supp(f)$, either $|\partial_{x_1}\phi(y)|\geq \lambda/2$ for all $y\in B_r(x)$ or $|\partial_{x_2}\phi(y)|\geq \lambda/2$ for all $y\in B_r(x)$. Since $f$ has compact support, we can find a set of points $X=\{x_k:k=1,...,n\}\subset\mathbb{R}^2$ such that $$supp(f) \subseteq \cup_{k=1}^n B_{r/2}(x_k).$$ 
    
    We now apply a standard procedure to construct a partition of unity subordinate to the slightly modified open cover $$ \mathcal{B} = \{B_{r}(x_k):x_k\in X\}.$$  For fixed $k$, $1\leq k\leq n$, and for $x\in\mathbb{R}^2$, set 
    $$h_k(x) = \chi \left(\frac{x-x_k}{r}\right),$$
    where $\chi$ is a smooth radial bump function which is identically one on $B_{1/2}(0)$ and vanishes on $B_1(0)^c$.  Note that, by construction, $h_k (z)= 1$ for all $z\in B_{r/2}(x_k)$. Now, for $x\in\mathbb{R}^2$, set 
    \begin{equation*}\label{partition}
    g_k (x) =
\begin{cases}
      \frac{h_k(x)}{\sum_{l=1}^n h_l(x)}, \quad & x\in \cup_{k=1}^n B_{r}(x_k)\\
       0, \quad &\text{otherwise}.
    \end{cases}    
\end{equation*}
    Then $\mathcal{U}=\{g_k:1\leq k\leq n\}$ is clearly a partition of unity subordinate to $\mathcal{B}$. Moreover, for each $x\in supp(f)$, there exists $l$ between $1$ and $n$ such that $x\in B_{r/2}(x_l)$, ensuring that $\sum_{l=1}^n h_l(x)\geq 1$ for all $x\in \cup_{k=1}^n B_{r}(x_k)$.  
    
     We distribute the positive integers $k=1, \dots, n$ among two sets $K_1$ and $K_2$, where $$K_j = \{k: |\partial_{x_j}\phi(y)|\geq \lambda/2 \text{ for all }y\in B_{r/2}(x_k) \}, $$ with the understanding that $K_1\cap K_2$ may be nonempty. Now, since for $1\leq k\leq n$ and $1\leq j\leq 2$, $fg_ke^{-i\phi(x)}=\frac{fg_k}{-i\partial_{x_j}\phi(x)}\partial_{x_j}e^{-i\phi(x)}$, using the identity $f=\sum_{k=1}^n fg_k$ and integration by parts, we may write \begin{equation}\label{oscillatory integral first ibp}
        \begin{split}
          &\left|  \int_{\mathbb{R}^2}f(x)e^{-i\phi(x)}\,dx\right|\leq\sum_{j=1}^2\sum_{k\in K_j} \left|\int_{B_{r}(x_k)}\frac{f(x)g_k(x)}{\partial_{x_j}\phi(x)}\partial_{x_j}e^{-i\phi(x)}\,dx\right|
            \\ & \qquad\qquad = \sum_{j=1}^2\sum_{k\in K_j}\left|\int_{B_{r}(x_k)}\frac{\partial_{x_j}\left(f(x)g_k(x)\right)\partial_{x_j}\phi(x)-f(x)g_k(x)\partial_{x_j}^2\phi(x)}{\left(\partial_{x_j}\phi(x)\right)^2}e^{-i\phi(x)}\,dx\right|,
        \end{split}
    \end{equation}  
    where the inequality results from the possibility that $K_1\cap K_2\not= 0$.
    Since $|\partial_{x_j}\phi(x)|\geq \lambda/2$ on $B_r(x_k)$ when $k\in K_j$, it follows that 
    \begin{equation}\label{oscest}
    \begin{split}
    &\left|\int_{\mathbb{R}^2}f(x)e^{-i\phi(x)}\,dx\right|\leq \frac{Cn}{\lambda^2}\sup_{1\leq k \leq n}\left(\|f\|_{W^{1,1}}\| g_k\|_{C^1}\|\nabla \phi\|_{\infty} +\|f\|_{L^1}\|g_k\|_{L^\infty}\|\nabla \phi\|_{C^1}\right)\\
    &\qquad \leq \frac{Cn}{\lambda^2}\| \nabla\phi\|_{C^1}\|f\|_{W^{1,1}}\sup_{1\leq k \leq n} \| g_k\|_{C^1}.
    \end{split}
    \end{equation}
    To complete the proof of Lemma \ref{nonstationary phase lemma} for $N=1$, we must bound $n$ and $\|g_k\|_{C^1}$.  Since $\sum_{l=1}^n h_l(x)\geq 1$ for all $x\in \cup_{k=1}^n B_{r}(x_k)$, an application of the quotient rule gives
    \begin{equation}\label{gfirstder}
    \| \nabla g_k \|_{\infty} \leq \frac{Cn}{r} \leq \frac{Cn\|\nabla\phi\|_{C^1}}{\lambda} 
    \end{equation}
    for each $k$.  Moreover, since $supp(f) \subseteq B_R(0)$, we can choose $n$ to satisfy $$n\leq C\left(\frac{R}{r}+1\right)^2 \leq C\left(\frac{R\|\nabla\phi\|_{C^1}}{\lambda}+1\right)^2.$$ Substituting these estimates into (\ref{oscest}) gives
    \begin{equation*}
    \begin{split}
    &\left|\int_{\mathbb{R}^2}f(x)e^{-i\phi(x)}\,dx\right|\leq \frac{C\|\nabla\phi\|_{C^1}}{\lambda^2}\|f\|_{W^{1,1}}\left(1+ \frac{R\|\nabla\phi\|_{C^1}}{\lambda} \right)^5.
    \end{split}
    \end{equation*}
    This verifies the lemma for $N=1$. Now, returning to \eqref{oscillatory integral first ibp}, we once again integrate the right hand side by parts and obtain, in a manner similar to that above, the inequality 
      \begin{equation}\label{oscest3}
    \left|\int_{\mathbb{R}^2}f(x)e^{-i\phi(x)}\,dx\right|\leq \frac{Cn}{\lambda^4}\|\nabla\phi\|^2_{C^2}\|f\|_{W^{2,1}}\sup_{1\leq k \leq n}\|g_k\|_{C^2}.
    \end{equation}
    Again by the quotient rule, for multi-indices $|\beta|=2$,
    \begin{equation}\label{gsecondder}
\|\partial^{\beta}g_k\|_{\infty} \leq \frac{Cn^2}{r^2}.
\end{equation}
    We substitute the above bound for $n$ into (\ref{gsecondder}) and apply the resulting inequality and (\ref{gfirstder}) to (\ref{oscest3}), which gives
    $$\left|\int_{\mathbb{R}^2}f(x)e^{-i\phi(x)}\,dx\right| \leq \frac{C\|\nabla\phi\|^2_{C^2}}{\lambda^4}\|f\|_{W^{2,1}}\left(1+ \frac{R\|\nabla\phi\|_{C^1}}{\lambda} \right)^6.$$ Iterating this process $N$ times yields the result.
\end{proof}

\section{Local Norm Inflation}\label{section Local Norm Inflation}

In this section, we show that there exists smooth initial data $\omega_0$ and a smooth function $\beta$, both small in $H^{1,\alpha}$-norm, such that the perturbed initial data $\omega_0+\beta$ exhibits local norm inflation; specifically, $\omega_0+\beta$ generates a solution to (\ref{VorticityEquation}) whose $H^{1,\alpha}$-norm can be made arbitrarily large on the time interval $[0,1]$.  

To prove local norm inflation, we construct initial data that yields a solution with large Lagrangian deformation matrix $\nabla X^t$.  We then perturb the initial data by a highly oscillatory function localized around the point at which the large deformation occurs. We will show that, if the frequency of oscillation is large enough, then the solution is well-approximated in $H^{1,\alpha}$-norm by the propagation of the perturbation $\beta$ by the particle trajectory map of the original solution. The Lagrangian stretching of the oscillation then yields the norm inflation.

We complete this process in two parts, by first showing in Theorem \ref{localnorminflationtheorem} that under suitable conditions such a perturbation exists, and then showing in Theorem \ref{largedeformationexistence} that the conditions of Theorem \ref{localnorminflationtheorem} can be met.

 \begin{theorem}\label{localnorminflationtheorem}
     Let $\omega_0\in C^\infty_c(\mathbb{R}^2)$ satisfy the following properties:\\
     \indent (i) there exists $R_0>0$ such that supp$(\omega_0)$$\subset B_{R_0}(0)$, and
    \\ \indent (ii) there exists $t_0\in [0,1]$ such that $$\max_{0\leq t\leq 1}\|\nabla X^t\|_{L^\infty(B_{R_0}(0))}=\|\nabla X^{t_0}\|_{L^\infty(B_{R_0}(0))}=L>2^{13},$$ where $X^t=(X^t_1,X^t_2)$ denotes the particle trajectory map corresponding to the solution, $\omega$, of (\ref{VorticityEquation}) with initial data $\omega_0$, i.e. $$\frac{dX^t}{dt}=(\nabla^\perp(-\Delta)^{-1}\omega)(X^t(\alpha),t),\quad X^0(\alpha)=\alpha,\quad (\alpha,t)\in\mathbb{R}^2\times [0,\infty).$$ 
    Then there exists a function $\beta\in C^\infty_c(\mathbb{R}^2)$ such that the solution $\widetilde{\omega}$ to (\ref{VorticityEquation}) with initial data $\widetilde{\omega_0}=\omega_0+\beta$ satisfies \begin{equation}\label{largeH1alphanormconclusion}
    \|\widetilde{\omega}(t_0)\|_{H^{1,\alpha}}>L^{\frac{1}{3}}.\end{equation}
    Moreover, $\beta$ satisfies    
\begin{equation}\label{smallperturb} 
\begin{split}
&\text{supp}(\beta)\subset B_{R_0}(0),
\\
&\|\beta\|_{L^p}\leq \|\omega_0\|_{L^p} \quad \text{ for all  } p\in[1,\infty], \text{ and }
\\ &|\beta|_{H^{1,\alpha}}\leq c_\alpha L^{-\frac{1}{2}},
\end{split}
\end{equation}
for a constant $c_\alpha>0$ which is independent of both $\omega_0$ and $L$.
\end{theorem}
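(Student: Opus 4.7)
The plan is to construct $\beta$ as a small-amplitude, highly oscillatory bump localized near a point $\alpha_0\in B_{R_0}(0)$ where the deformation $|\nabla X^{t_0}(\alpha_0)|_\infty=L$ is attained. Concretely, I take
\[
\beta(x) = A\,\eta(x-\alpha_0)\cos(\lambda\cdot x),
\]
where $\eta\in C^\infty_c(\mathbb{R}^2)$ is a fixed bump with support small enough that $\nabla X^{t_0}$ is essentially constant on it, $A>0$ is a small amplitude, and $\lambda\in\mathbb{R}^2$ is a large frequency vector chosen so that $(\nabla X^{t_0}(\alpha_0))^{-T}\lambda$ has magnitude $\gtrsim L|\lambda|$. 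Such a $\lambda$ exists because incompressibility forces $\det \nabla X^{t_0}=1$, so the largest singular value of $(\nabla X^{t_0}(\alpha_0))^{-T}$ is at least $L$. The free parameters $A$, $|\lambda|$, and the $L^2$ mass of $\eta$ will be tuned so that $A|\lambda|\log^\alpha|\lambda|\|\eta\|_{L^2}\sim L^{-1/2}$ with $|\lambda|\gg L$.

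The smallness properties \eqref{smallperturb} of $\beta$ are checked directly. The support condition holds by localization, and the $L^p$ bound follows from $\|\beta\|_{L^p}\lesssim A\|\eta\|_{L^p}$ by taking $A$ small. For the $H^{1,\alpha}$ seminorm, a direct Fourier computation gives $|\beta|_{H^1}\sim A|\lambda|\|\eta\|_{L^2}$ and $\|\beta\|_{H^2}\sim A|\lambda|^2\|\eta\|_{L^2}$; Lemma \ref{interpolation lemma} with $t=1$, $s=2$ then yields
\[
|\beta|_{H^{1,\alpha}} \lesssim A|\lambda|\log^\alpha|\lambda|\|\eta\|_{L^2},
\]
which by the parameter choice is at most $c_\alpha L^{-1/2}$.

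For the inflation itself, I use vorticity transport to write $\widetilde{\omega}(t_0,x)=\omega_0(\widetilde{X}^{-t_0}(x))+\beta(\widetilde{X}^{-t_0}(x))$, where $\widetilde{X}$ is the perturbed trajectory map. A Gr\"onwall argument driven by the $L^1\cap L^\infty$ smallness of $\beta$ shows $\|\widetilde{X}^{-t_0}-X^{-t_0}\|_{C^1(B_{R_0})}$ is negligible, so up to a small error,
\[
\widetilde{\omega}(t_0) \approx \omega_0\circ X^{-t_0} + \beta\circ X^{-t_0}.
\]
Changing variables via incompressibility,
\[
\widehat{\beta\circ X^{-t_0}}(\xi) = \tfrac{A}{2}\int \eta(y-\alpha_0)\bigl(e^{i(\lambda\cdot y-\xi\cdot X^{t_0}(y))}+e^{-i(\lambda\cdot y+\xi\cdot X^{t_0}(y))}\bigr)\,dy.
\]
Applying Lemma \ref{nonstationary phase lemma} to each integral, $\widehat{\beta\circ X^{-t_0}}(\xi)$ decays rapidly unless $\xi\approx\pm(\nabla X^{t_0}(\alpha_0))^{-T}\lambda$, i.e., unless $|\xi|\sim L|\lambda|$. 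Combined with Plancherel and the fact that $\|\beta\circ X^{-t_0}\|_{L^2}\sim A\|\eta\|_{L^2}$, this yields
\[
|\beta\circ X^{-t_0}|_{H^{1,\alpha}} \gtrsim AL|\lambda|\log^\alpha(L|\lambda|)\|\eta\|_{L^2} \gtrsim L^{1/2}.
\]
The same nonstationary phase bound applied to $\widehat{\omega_0\circ X^{-t_0}}(\xi)$ (whose phase gradient is $\gtrsim |\xi|/L$ by incompressibility) shows rapid decay at frequencies $\sim L|\lambda|$, so this term cannot cancel the dominant contribution, and $\|\widetilde{\omega}(t_0)\|_{H^{1,\alpha}}\gtrsim L^{1/2}>L^{1/3}$.

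The main obstacle is the quantitative Fourier analysis just described: I must track how the constant in Lemma \ref{nonstationary phase lemma} depends on the $C^N$ norms of $X^{t_0}$ (which can grow polynomially in $L$) and ensure that, by taking $|\lambda|$ a sufficiently large power of $L$, the oscillatory-integral decay at "wrong" frequencies overwhelms this growth. A secondary technical issue is the stability of the flow under a high-frequency perturbation: $\beta$ varies on scale $|\lambda|^{-1}$, so na\"ive flow errors of that size would already be fatal. This is resolved by the smoothing effect of the Biot--Savart law, which controls $K*\beta$ in log-Lipschitz norm by $\|\beta\|_{L^1\cap L^\infty}\lesssim A$; choosing $A$ small enough thus keeps $\|\widetilde{X}^t-X^t\|_{C^1}\ll|\lambda|^{-1}$ uniformly on $[0,t_0]$.
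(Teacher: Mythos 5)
Your overall strategy matches the paper's: perturb by a small-amplitude, frequency-$|\lambda|$ oscillation localized where $\nabla X^{t_0}$ is large, and use nonstationary phase to show the transported perturbation has its $L^2$ mass pushed to frequencies $\sim L|\lambda|$. The construction of $\beta$ (apart from symmetrization), the smallness check via Lemma \ref{interpolation lemma}, and the use of Lemma \ref{nonstationary phase lemma} are all essentially the paper's moves. The genuine gap is in how you reconcile $\beta\circ\widetilde{X}^{-t_0}$ with $\beta\circ X^{-t_0}$. You claim ``choosing $A$ small enough keeps $\|\widetilde{X}^t-X^t\|_{C^1}\ll|\lambda|^{-1}$.'' This reasoning does not hold: once you impose the inflation constraint $A|\lambda|\log^\alpha|\lambda|\|\eta\|_{L^2}\sim L^{-1/2}$, the amplitude $A$ is \emph{not} a free parameter (rescaling $\eta$ rescales $A$ inversely, so $\|\beta\|_{L^\infty}\sim A\|\eta\|_\infty$ is fixed). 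Moreover the gradient part of the $C^1$ flow difference is controlled by $\|\nabla\widetilde{u}-\nabla u\|_\infty$, which by the available interpolation (e.g.\ $\|\nabla(\widetilde u - u)\|_\infty\lesssim\|\widetilde\omega-\omega\|_{L^2}^{1/2}\|\nabla(\widetilde\omega-\omega)\|_{L^2}^{1/2}$ with $\|\nabla\beta\|_{L^2}\sim A|\lambda|$) cannot be made $\ll |\lambda|^{-1}$; it is only $O(|\lambda|^{-\theta})$ for some $\theta<1$. So the asserted $C^1$ stability is simply false, and it is also a much stronger requirement than what the argument actually needs.

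The paper avoids this entirely by a different error decomposition that only needs an $L^2$ bound. Writing $\widetilde\omega = W + p$ with $W=\omega_0\circ\widetilde X^{-t}$, $p=\beta\circ\widetilde X^{-t}$, and comparing $p$ to $q=\beta\circ X^{-t}$, the key observation is that the lower bound on $\|p(t_0)\|_{H^{1,\alpha}}$ needs \emph{only} $\|p-q\|_{L^2}$ to be small: you show $\|\widehat q\,\mathds{1}_{|\xi|\geq kL/4}\|_{L^2}$ is bounded below (nonstationary phase plus Plancherel), transfer that to $\widehat p$ using the $L^2$ closeness, and then multiply by the $H^{1,\alpha}$ weight on those frequencies. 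The $L^2$ bound on $p-q$ is an elementary energy estimate on the transport equation (the paper's $\eta$-equation), with no flow-map stability at all. In addition, instead of a second nonstationary-phase argument on $\omega_0\circ X^{-t_0}$, the paper uses the cleaner WLOG reduction $\|\omega(t_0)\|_{H^{1,\alpha}}\leq L^{1/3}$ (since otherwise $\beta\equiv 0$ already works) and a bound on $\|W-\omega\|_{H^{1,\alpha}}$ via $L^2$ energy plus interpolation. I would recommend adopting this $L^2$-level comparison of $p$ and $q$: it removes both the fragile $C^1$ flow-stability claim and the much harder $H^{1,\alpha}$ error estimate you would otherwise have to carry out.
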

\begin{remark}
The existence of a solution $\omega$ of (\ref{VorticityEquation}) satisfying the assumptions of Theorem \ref{localnorminflationtheorem} is established in Theorem \ref{largedeformationexistence} below.  
\end{remark}
    \begin{proof} 
         Let $\omega$ denote the solution to (\ref{VorticityEquation}) with initial data $\omega_0$. First note that if $\omega$ is such that $\|\omega(t_0)\|_{H^{1,\alpha}}>L^{\frac{1}{3}}$, then we can take $\beta\equiv 0 $ and the theorem holds. Thus, in what follows we assume that \begin{equation}
             \label{OriginalH1alphasmallassumption} \|\omega(t_0)\|_{H^{1,\alpha}}\leq L^{\frac{1}{3}}.
         \end{equation}
         {\bf Construction of $\beta$ satisfying (\ref{smallperturb}).} Observe that by assumption ($ii$) and the smoothness of $ X^t$, there exists $x_0=(x_0^1,x_0^2)$ with $x_0^1x_0^2\not=0$ and $r_0>0$ such that $B_{r_0}(x_0)\subset B_{R_0}(0)$ and such that one component of $\nabla X^{t_0}$ is at least $L/2$ in magnitude on $B_{r_0}(x_0)$.  Without loss of generality, we assume 
        \begin{equation}\label{Lagrangrian Deformation Large in small ball}
            \frac{L}{2}\leq
        \left|\frac{\partial X^{t_0}_2}{\partial x_2}(x)\right|\leq L
        \end{equation}
        for all $x\in B_{r_0}(x_0)$. 
        
        We now construct our highly oscillatory perturbation $\beta$. First, let $\rchi\in C^\infty_c(\mathbb{R}^2)$ be a standard radial bump function with supp$(\rchi)\subset B_1(0)$, $0\leq\rchi(x)\leq1$ for all $x\in\mathbb{R}^2$, and $\rchi(x)=1$ for all $x\in B_{1/2}(0)$. Choose $\delta>0$ so that $\delta\ll \min\{|x_0^1|,|x_0^2|,r_0\}$, and define $$\varphi(x_1,x_2)=\frac{1}{\delta}\sum_{a_1,a_2\in\{-1,1\}}a_2\rchi\left(\frac{x_1-a_1x_0^1}{\delta},\frac{x_2-a_2x_0^2}{\delta}\right).$$ Note that $\varphi$ is an odd function of $x_2$ and an even function of $x_1$. Moreover, by our choice of $\delta$, the support of $\varphi$ is contained in four disjoint balls, each of which is contained in $B_{R_0}(0)$.  One can then check that $\|\varphi\|_{L^2} = 4\|\chi\|_{L^2}$.
        
        Now, for fixed $k\in\mathbb{N}$, define $$\beta(x_1,x_2) = \frac{1}{k\log^\alpha(k+1)\sqrt{L}}\sin(kx_1)\varphi(x_1,x_2).$$  It follows immediately that supp($\beta$) is contained in $B_{R_0}(0)$;  thus, $(\ref{smallperturb})_1$ is satisfied.   
        
        We wish to show that if $k>0$ is chosen sufficiently large, then $\beta$ satisfies the norm bounds in  (\ref{smallperturb}).  To this end, observe that for $p\in[1,\infty]$, \begin{equation}\label{beta Lp estimate}\|\beta\|_{L^p}\leq\frac{\|\varphi\|_{L^p}}{k\log^\alpha(k+1)\sqrt{L}}, 
        \end{equation} 
        so that, for $k$ sufficiently large, $\|\beta\|_{L^p}\leq \|\omega_0\|_{L^p}$ for each $p\in [1,\infty]$.  To establish the bound on the $H^{1,\alpha}$-seminorm of $\beta$, we estimate higher derivatives of $\beta$ in $L^2$-norm and apply (\ref{log sobolev interpolation inequality}).  Note that by the product rule and the chain rule,$$\|\nabla \beta\|_{L^2}\leq \frac{1}{\log^\alpha(k+1)\sqrt{L}}\left(\|\varphi\|_{L^2}+\frac{1}{k}\|\nabla\varphi\|_{L^2}\right),$$
        $$\|\nabla^2\beta\|_{L^2}\leq \frac{1}{\log^\alpha(k+1)\sqrt{L}}\left(k\|\varphi\|_{L^2}+2\|\nabla\varphi\|_{L^2}+\frac{1}{k}\|\nabla^2\varphi\|_{L^2}\right).$$
        So, for large enough $k$, we have  
        \begin{equation}\label{beta H1 estimate}
            \|\nabla \beta\|_{L^2}\leq \frac{2}{\log^{\alpha}(k+1)\sqrt{L}}\|\varphi\|_{L^2}
        \end{equation}
        and
        \begin{equation}\label{beta H2 estimate}
            \|\nabla^2\beta\|_{L^2}\leq \frac{2k}{\log^{\alpha}(k+1)\sqrt{L}}\|\varphi\|_{L^2}
        \end{equation}

Applying Lemma \ref{interpolation lemma} with (\ref{beta H1 estimate}), (\ref{beta H2 estimate}), we obtain, for sufficiently large $k$,
\begin{equation*}
\begin{split}
&|\beta|_{H^{1,\alpha}}\leq C_{\alpha} |\beta|_{H^1} \log^{\alpha} \left( \frac{\|\beta\|_{H^2}}{|\beta|_{H^1}} +1 \right)
\le C_{\alpha}|\beta|_{H^1}\log^\alpha\left( \frac{6k\|\varphi\|_{L^2}}{|\beta|_{H^1}\log^{\alpha}(k+1)\sqrt{L}} +1 \right)\\
&\qquad \le \frac{2C_{\alpha}\|\varphi\|_{L^2}}{\log^{\alpha}(k+1)\sqrt{L}}\log^\alpha\left( 3k +1 \right)\leq \frac{c_\alpha}{\sqrt{L}}.
\end{split}
\end{equation*}
        Thus, (\ref{smallperturb}) is satisfied. \\
     \\   
        \noindent {\bf Proof of (\ref{largeH1alphanormconclusion})} Define $\widetilde{\omega}$ by $$\begin{cases}
            \partial_t\widetilde{\omega}+\widetilde{u}\cdot\nabla \widetilde{\omega} = 0,
            \\ \widetilde{u}=\nabla^\perp(-\Delta)^{-1}\widetilde{\omega},
            \\ \widetilde{\omega}|_{t=0}=\widetilde{\omega}_0:=\omega_0+\beta,
        \end{cases}$$ and $\widetilde{X}^t$ by $$\frac{d}{dt}\widetilde{X}^t(x)=\widetilde{u}(\widetilde{X}^t(x),t),\quad \widetilde{X}^0(x)=x, \quad (x,t)\in\mathbb{R}^2\times [0,\infty).$$
    Define the two functions $$W(\cdot,t)= \omega_0\circ\widetilde{X}^{-t}, \quad p(\cdot,t) = \beta\circ\widetilde{X}^{-t}.$$  Since $\tilde{\omega} = W + p$, it follows from the triangle inequality that    \begin{equation}\label{triangle inequality}
\|\widetilde\omega\|_{H^{1,\alpha}}\geq\|p\|_{H^{1,\alpha}}-\|W\|_{H^{1,\alpha}}.
\end{equation}
To establish (\ref{largeH1alphanormconclusion}), we will first show that $\|W\|_{H^{1,\alpha}}$ can be approximated by $\|\omega\|_{H^{1,\alpha}}$.  Specifically, we establish (\ref{Womegaapprox1}) below.  Our assumption (\ref{OriginalH1alphasmallassumption}) then gives an upper bound for $\|W(t_0)\|_{H^{1,\alpha}}$ in terms of $L$.  This upper bound, combined with (\ref{triangle inequality}), reduces the proof of (\ref{largeH1alphanormconclusion}) to showing that $\|p(t_0)\|_{H^{1,\alpha}}$ is sufficiently large.
\\
    
  \noindent {\bf Estimate for $\| W -\omega\|_{H^{1,\alpha}}$.} We will show that, for sufficiently large $k$, \begin{equation}\label{Womegaapprox1}
  \max_{0\leq t\leq 1}\| W -\omega\|_{H^{1,\alpha}}<1.
  \end{equation}
  First note that $W-\omega$ satisfies
  \begin{equation*}\label{Womegaapprox}
  \partial_{t}(W-\omega)+(\widetilde{u}-u)\cdot \nabla W+u\cdot \nabla(W-\omega)=0.
  \end{equation*}
  We multiply this equality by $W-\omega$, integrate over $\mathbb{R}^2$, and apply Holder's inequality to write 
  \begin{equation}\label{W-omega1}
  \begin{split}
  &\frac{1}{2}\frac{d}{dt}\|W-\omega\|_{L^2}^2 = -\int_{\mathbb{R}^2} (\widetilde{u}-u)\cdot \nabla W (W-\omega) \, dx - \int_{\mathbb{R}^2} u\cdot \nabla(W-\omega)(W-\omega) \, dx\\
  &\qquad \leq \|u-\widetilde{u}\|_{L^4}\|\nabla W\|_{L^4}\|W-\omega\|_{L^2}.  \\
  \end{split}
  \end{equation}
  Note that we used the divergence free condition on $u$ to conclude that the second integral above vanishes.  We now utilize the Hardy-Littlewood-Sobolev inequality and the compact support of $\omega$ and $\tilde{\omega}$ to write
 \begin{equation}\label{uomegabound}
 \|u-\widetilde{u}\|_{L^4} \lesssim\|\omega-\widetilde{\omega}\|_{L^{4/3}} \lesssim \|\omega-\widetilde{\omega}\|_{L^{2}},
 \end{equation}
  where the implied constant depends on the measure of the support of $\omega_0$.  Substituting this estimate into (\ref{W-omega1}) gives
  \begin{equation}\label{omega pregronwall}
  \begin{split}
  &\frac{d}{dt}\|W-\omega\|_{L^2}^2 \lesssim \|\omega-\widetilde{\omega}\|_{L^2}\|\nabla W\|_{L^4}\|W-\omega\|_{L^2}\\
  &\qquad \lesssim \|\omega-\widetilde{\omega}\|_{L^2}\| W\|_{H^2}\|W-\omega\|_{L^2} \lesssim \|\omega-\widetilde{\omega}\|_{L^2}\|W-\omega\|_{L^2},
  \end{split}
  \end{equation}
  where we applied the Sobolev 
  Embedding Theorem to obtain the second inequality and Lemma \ref{W H2 bound part 2} to obtain the third inequality.
  
 To complete the bound on $\|W-\omega\|_{L^2}$, we must estimate $\|\omega-\widetilde{\omega}\|_{L^2}$.  To this end, note that $\omega-\widetilde{\omega}$ satisfies 
 $$\partial_t(\omega-\widetilde{\omega})+(u-\widetilde{u})\cdot\nabla \omega+\widetilde{u}\cdot\nabla(\omega-\widetilde{\omega})=0.$$ 
 Multiplying by $\omega-\widetilde{\omega}$, integrating in space, and applying Holder's inequality gives 
 $$\frac{d}{dt}\|\omega-\widetilde{\omega}\|_{L^2}^2 \leq 2\|u-\widetilde{u}\|_{L^4}\|\nabla\omega\|_{L^4}\|\omega-\widetilde{\omega}\|_{L^2}-\int_{\mathbb{R}^2}\widetilde{u}\cdot\nabla(\omega-\widetilde{\omega})^2\, dx.$$
 The integral term above is again zero by the incompressibility of $\widetilde{u}$. By (\ref{uomegabound}) and Lemma \ref{nabla w L4 bound},
\begin{equation}
\begin{split}
&\frac{d}{dt}\|\omega-\widetilde{\omega}\|_{L^2}^2\lesssim\|\nabla\omega\|_{L^4}\|\omega-\widetilde{\omega}\|_{L^2}^2 \lesssim \|\omega-\widetilde{\omega}\|_{L^2}^2.
\end{split}
\end{equation}
Gr\"onwall's lemma and (\ref{beta Lp estimate}) then yield \begin{equation}\label{L2 estimate of vorticity difference1}
    \max_{0\leq t\leq 1}\|\omega-\widetilde{\omega}\|_{L^2}\lesssim \|\omega_0-\widetilde{\omega_0}\|_{L^2}\lesssim \frac{1}{k\log^{\alpha}(k+1)}.
\end{equation}       
Substituting (\ref{L2 estimate of vorticity difference1}) into (\ref{omega pregronwall}), integrating in time, and using that $W(0) = \omega(0)$ gives   
\begin{equation}\label{W-omega L2 difference}
    \max_{0\leq t\leq 1}\|W-\omega\|_{L^2}\lesssim \frac{1}{k\log^{\alpha}(k+1)}.
\end{equation}  
Combining Lemma \ref{W H2 bound part 2}, Lemma \ref{W H2 bound}, and (\ref{W-omega L2 difference}) and applying Sobolev interpolation together with the embedding $H^{\frac{3}{2}}(\mathbb{R}^2)\hookrightarrow H^{1,\alpha}(\mathbb{R}^2)$ gives $$\max_{0\leq t\leq 1}\|W-\omega\|_{H^{1,\alpha}}\lesssim\max_{0\leq t\leq 1}\|W-\omega\|_{H^{\frac{3}{2}}}\lesssim\left(\frac{1}{k\log^{\alpha}(k+1)}\right)^{\frac{1}{4}}.$$ 
Thus for sufficiently large $k$, 
\begin{equation}\label{W-omega log sobolev estimate}
\max_{0\leq t\leq 1}\|W-\omega\|_{H^{1,\alpha}}<1.
\end{equation}   
 \Obsolete{ 
  Using again $\|\nabla W\|_{L^4}\lesssim \|W\|_{H^2}\leq C_0$ and $\|u-\widetilde{u}\|_{L^4}\lesssim\|\omega-\widetilde{\omega}\|_{L^2}\lesssim k^{-1}\log^{-\alpha}(k+1)$ by (\ref{L2 estimate of vorticity difference}), we get \begin{equation}\label{W-omega L2 difference}
    \max_{0\leq t\leq 1}\|W-\omega\|_{L^2}\lesssim \frac{1}{k\log^{\alpha}(k+1)}.
\end{equation}
Combining (\ref{W H2 max}), (\ref{vorticity H2 max}), and (\ref{W-omega L2 difference}) and applying Sobolev interpolation gives $$\|W-\omega\|_{H^1}\lesssim\left(\frac{1}{k\log^{\alpha}(k+1)}\right)^{\frac{1}{2}},$$ and now Lemma (\ref{interpolation lemma}) let's us conclude, for large enough $k$ 
\begin{equation}\label{W-omega log sobolev estimate}
\|W-\omega\|_{H^{1,\alpha}}\lesssim \left(\frac{1}{k\log^{\alpha}(k+1)}\right)^{\frac{1}{2}}\log\left(2C_0\sqrt{k\log^{\alpha}(k+1)}+1\right)<1
\end{equation}

  We investigate the difference between $u=\nabla^\perp(-\Delta)^{-1}\omega$ and $\widetilde{u}$ in $L^\infty([0,1];W^{1,\infty}(\mathbb{R}^2)^2)$. Observe the following interpolation inequality \begin{equation}\label{velocity gradient difference}
        \|\nabla\widetilde{u}-\nabla u\|_\infty\lesssim\|\widetilde{\omega}-\omega\|_{L^2}^{\frac{1}{3}}\left(\|\nabla\widetilde{\omega}\|_{L^4}+\|\nabla\omega\|_{L^4}\right)^{\frac{2}{3}}.
    \end{equation} Here, and for the rest of the proof, we write $A\lesssim B$ if $A\leq CB$ for some constant $C>0$ which is potentially dependent on $\omega_0$, but always independent of $k$.
    \\ Now by the equation satisfied by $\widetilde{\omega}$, using energy methods we obtain the standard estimate $$\frac{d}{dt}\|\nabla \widetilde{\omega}\|_{L^4}\lesssim\|\nabla \widetilde{u}\|_\infty\|\nabla \widetilde{\omega}\|_{L^4},$$ which, when combined with the logarithmic-Sobolev inequality \textcolor{red}{CITE BKM} $$\|\nabla \widetilde{u}\|_\infty\lesssim 1+\|\widetilde{\omega_0}\|_2+\|\widetilde{\omega_0}\|_\infty\log\left(2+\|\nabla \widetilde{\omega}\|_{L^4} \right),$$ yields double exponential growth of $\|\nabla\widetilde{\omega}\|_{L^4}$. Hence
    \begin{equation}\label{L4 Gradient perturbed vorticity estimate} 
        \max_{0\leq t\leq 1}\|\nabla \widetilde{\omega}\|_{L^4}\lesssim \|\nabla \widetilde{\omega_0}\|_{L^4}+\|\omega_0\|_\infty.
    \end{equation}
     Note, however, that by the definition of $\beta$, $\|\nabla\beta\|_{L^4}\lesssim\log^{-\alpha}(k+1)$, and thus the implied constant in (\ref{L4 Gradient perturbed vorticity estimate}) can be taken independent of $k$ and only dependent on $\omega_0.$ Similarly, we have \begin{equation}\label{L4 Gradient vorticity estimate}
         \max_{0\leq t\leq 1}\|\nabla \omega\|_{L^4} \lesssim \|\nabla\omega_0\|_{L^4}+\|\omega_0\|_\infty.
     \end{equation}
We also have $$\partial_t(\omega-\widetilde{\omega})+(u-\widetilde{u})\cdot\nabla \omega+\widetilde{u}\cdot\nabla(\omega-\widetilde{\omega})=0.$$ Multiplying by $\omega-\widetilde{\omega}$ and integrating in space gives $$\frac{d}{dt}\|\omega-\widetilde{\omega}\|_{L^2}^2 \leq 2\|u-\widetilde{u}\|_{L^4}\|\nabla\omega\|_{L^4}\|\omega-\widetilde{\omega}\|_{L^2}+\int_{\mathbb{R}^2}\widetilde{u}\cdot\nabla(\omega-\widetilde{\omega})^2\, dx.$$
The last term is zero by the incompressibility of $\widetilde{u}$. Because $\omega-\widetilde{\omega}$ is compactly supported, we have by Hardy-Littlewood-Sobolev $$\|u-\widetilde{u}\|_{L^4}\lesssim\|\omega-\widetilde{\omega}\|_{L^2},$$ where the implied constant depends only on the measure of the support of $\omega_0$. Thus, 
$$\frac{d}{dt}\|\omega-\widetilde{\omega}\|_{L^2}^2\lesssim\|\nabla\omega\|_{L^4}\|\omega-\widetilde{\omega}\|_{L^2}^2.$$ Gr\"onwall's lemma and (\ref{L4 Gradient vorticity estimate}) then yield \begin{equation}\label{L2 estimate of vorticity difference}
    \max_{0\leq t\leq 1}\|\omega-\widetilde{\omega}\|_{L^2}\lesssim \|\omega_0-\widetilde{\omega_0}\|_{L^2}\lesssim \frac{1}{k\log^{\alpha}(k+1)}.
\end{equation}     
Putting together (\ref{velocity gradient difference}), (\ref{L4 Gradient perturbed vorticity estimate}), (\ref{L4 Gradient vorticity estimate}), and (\ref{L2 estimate of vorticity difference}) gives \begin{equation}\label{velocity gradient error}\max_{0\leq t\leq 1}\|\nabla u -\nabla\widetilde{u}\|_{\infty}\lesssim k^{-\frac{1}{3}}.\end{equation} Now, we have by Gagliardo-Nirenberg, \begin{equation}\label{Linfty velocity difference}\max_{0\leq t\leq 1}\|u-\widetilde{u}\|_\infty \lesssim\|\nabla u -\nabla\widetilde{u}\|_\infty^{\frac{1}{3}}\|u-\widetilde{u}\|_{L^4}^{\frac{2}{3}}\lesssim k^{-\frac{1}{9}}\|\omega-\widetilde{\omega}\|_{L^2}^{\frac{2}{3}}\lesssim k^{-\frac{7}{9}}. 
\end{equation}
{\bf Estimate for $\| \omega_0 \circ \tilde{X}^{-t} - \omega_0 \circ {X}^{-t}\|_{H^{1,\alpha}}$.} We define the two functions $$W(\cdot,t)= \omega_0\circ\widetilde{X}^{-t}, \quad p(\cdot,t) = \beta\circ\widetilde{X}^{-t}.$$ Clearly, $\widetilde{\omega} = W+p.$ We will show that for sufficiently large $k$, $\|W-\omega\|_{H^{1,\alpha}}<1.$ Since $$\partial_tW+\widetilde u\cdot\nabla W=0,\quad \partial_t\omega+u\cdot\nabla \omega=0, \quad W|_{t=0}=\omega|_{t=0}=\omega_0,
$$ we can apply a standard energy argument to obtain \begin{equation}\label{W H2 growth rate}\frac{d}{dt}\|W\|_{H^2}\lesssim \|\nabla \widetilde{u}\|_\infty\|W\|_{H^2}+\|\nabla^2\widetilde{u}\|_{L^4}\|\nabla W\|_{L^4}.\end{equation} By the boundedness properties of the Biot-Savart operator and (\ref{L4 Gradient perturbed vorticity estimate}), $$\|\nabla^2\widetilde u\|_{L^4}\lesssim\|\nabla\widetilde{\omega}\|_{L^4}\leq \|\nabla\omega_0\|_{L^4}+\|\omega_0\|_{L^\infty}$$ for $t\in[0,1]$. Also, Sobolev's inequality gives $\|\nabla W\|_{L^4}\lesssim\|W\|_{H^2}$ and $\|\nabla \widetilde{u}\|_\infty\lesssim\|\nabla\widetilde{u}\|_{W^{1,4}}\leq \|\widetilde{\omega}\|_{W^{1,4}}.$ Putting all this back into (\ref{W H2 growth rate}) and applying Gr\"onwall's inequality  provides 
\begin{equation}\label{W H2 max}
    \max_{0\leq t\leq 1}\|W\|_{H^2}\leq C_0<\infty
\end{equation} where $C_0>0$ is a constant only dependent on the norms and support of $\omega_0$. An identical argument yields \begin{equation}\label{vorticity H2 max}
\max_{0\leq t\leq 1}\|\omega\|_{H^2}\leq C_0.
\end{equation}
Next, writing the equation satisfied by $W-\omega$, we have $$\partial_{t}(W-\omega)+(\widetilde{u}-u)\cdot \nabla W+u\cdot \nabla(W-\omega)=0$$ and applying an energy argument gives $$\frac{d}{dt}\|W-\omega\|_{L^2}^2\leq \|u-\widetilde{u}\|_{L^4}\|\nabla W\|_{L^4}\|W-\omega\|_{L^2}.$$ Using again $\|\nabla W\|_{L^4}\lesssim \|W\|_{H^2}\leq C_0$ and $\|u-\widetilde{u}\|_{L^4}\lesssim\|\omega-\widetilde{\omega}\|_{L^2}\lesssim k^{-1}\log^{-\alpha}(k+1)$ by (\ref{L2 estimate of vorticity difference}), we get \begin{equation}\label{W-omega L2 difference}
    \max_{0\leq t\leq 1}\|W-\omega\|_{L^2}\lesssim \frac{1}{k\log^{\alpha}(k+1)}.
\end{equation}
Combining (\ref{W H2 max}), (\ref{vorticity H2 max}), and (\ref{W-omega L2 difference}) and applying Sobolev interpolation gives $$\|W-\omega\|_{H^1}\lesssim\left(\frac{1}{k\log^{\alpha}(k+1)}\right)^{\frac{1}{2}},$$ and now Lemma \ref{interpolation lemma} let's us conclude, for large enough $k$ 
\begin{equation}\label{W-omega log sobolev estimate}
\|W-\omega\|_{H^{1,\alpha}}\lesssim \left(\frac{1}{k\log^{\alpha}(k+1)}\right)^{\frac{1}{2}}\log\left(2C_0\sqrt{k\log^{\alpha}(k+1)}+1\right)<1
\end{equation}  }
\\
\noindent {\bf Lower bound for $\|p(\cdot,t_0)\|_{H^{1,\alpha}}$.}  Note that the estimate in (\ref{W-omega log sobolev estimate}) and the triangle inequality imply that, for sufficiently large $k$, $$\|\widetilde\omega\|_{H^{1,\alpha}}\geq\|p\|_{H^{1,\alpha}}-\|W\|_{H^{1,\alpha}}\geq\|p\|_{H^{1,\alpha}}-\|\omega\|_{H^{1,\alpha}}-1. $$  We will show that for sufficiently large $k$, 
\begin{equation}\label{plarge}
\|p(\cdot,t_0)\|_{H^{1,\alpha}}\geq \frac{1}{2}L^\frac{1}{2}.
\end{equation}
This estimate, combined with (\ref{OriginalH1alphasmallassumption}), will allow us to conclude that (\ref{largeH1alphanormconclusion}) holds, as (\ref{plarge}) implies
$$\|\widetilde{\omega}(t_0)\|_{H^{1,\alpha}}\geq \frac{1}{2}L^{\frac{1}{2}}-L^{\frac{1}{3}}-1>L^{\frac{1}{3}},$$ when $L>2^{13}.$

 To establish (\ref{plarge}), first define $$q(\cdot,t)=\beta\circ X^{-t}.$$ In what follows, we estimate the size of $q(\cdot,t_0)$ in $H^{1,\alpha}$ and show that, for frequencies near $kL$ with $k$ sufficiently large, $q$ sufficiently approximates $p$ in $L^2$-norm. This approximation will yield the desired lower bound on $\|p(\cdot,t_0)\|_{H^{1,\alpha}}$.
 
 To estimate $\|q(\cdot,t_0)\|_{H^{1,\alpha}}$, we analyze the frequencies of the Fourier transform of $q(\cdot,t_0)$. Denote $$\widehat{q}_{t_0}(\xi)=\int_{\mathbb{R}^2}q(x,t_0)e^{- i x\cdot\xi}\, dx.$$  
 The definition of $\beta$ combined with a change of variables $x\mapsto X^{t_0}(x)$ gives $$\widehat{q}_{t_0}(\xi)=\frac{1}{k\log^{\alpha}(k+1)\sqrt{L}}\int_{\mathbb{R}^2}\sin(kx_1)\varphi(x)e^{- iX^{t_0}(x)\cdot\xi}\, dx.$$ Defining $F(\xi):=k\log^{\alpha}(k+1)\sqrt{L}\widehat{q}_{t_0}(\xi)$, we have 
\begin{equation}\label{F as oscillatory integral}
F(\xi) =\frac{1}{2i}\int_{\mathbb{R}^2}\varphi(x)\left( 
e^{-iX^{t_0}(x)\cdot\xi+ikx_1}-
e^{-iX^{t_0}(x)\cdot\xi-ikx_1} \right)\, dx,
\end{equation}which is a difference of two oscillatory integrals. In order to estimate the size of $F$ in different ranges of frequencies, we apply the method of nonstationary phase. Consider the phases $$\phi^{\pm}_\xi(x):=X^{t_0}(x)\cdot\xi\pm kx_1.$$ Since $u$ is divergence free, $\det\nabla X^{t_0}(x) =1 $, which implies $$\nabla\phi_\xi^\pm(x) = \nabla X^{t_0}(x)\xi\pm ke_1 = \nabla X^{t_0}(x)\left(\xi \pm k\nabla^\perp X_2^{t_0}(x)\right), $$ where $e_1=\begin{pmatrix}
    1\\0
\end{pmatrix}$. Hence we may write $$\xi\pm k\nabla^\perp X_2^{t_0}(x)=\left(\nabla X^{t_0}(x)\right)^{-1}\left(\nabla \phi_\xi^\pm(x)\right).$$ 
Now observe that for every $x\in B_{r_0}(x_0)$, we have $$\frac{L}{2}\leq |\nabla^\perp X_2^{t_0}(x)|\leq \sqrt{2}L,$$ where the lower bound follows from (\ref{Lagrangrian Deformation Large in small ball}) and the upper bound follows from assumption (ii).
It follows that if $|\xi|\leq \frac{1}{4}kL$ and $x\in B_{r_0}(x_0)$, $$|\nabla \phi_\xi^\pm(x)|\geq \frac{1}{2|\left(\nabla X^{t_0}(x)\right)^{-1}|_\infty}|\xi\pm k\nabla^\perp X_2^{t_0}(x)|\geq \frac{1}{2\sqrt{2}L} \frac{kL}{4}\geq \frac{k}{8\sqrt{2}}.$$  
Thus, there exists some $c_0>0$ such that for any $x\in B_{r_0}(x_0)$ and $\xi\in\mathbb{R}^2$ satisfying $|\xi|\leq\frac{1}{4}kL$, we have 
\begin{equation}\label{phase estimate}
|\nabla \phi^\pm_\xi(x)|_\infty\geq c_0k.
\end{equation}
We refer to Lemma \ref{nonstationary phase lemma} to obtain the point-wise estimate on $F$, $$|F(\xi)|=\frac{1}{2}\left|\int_{\mathbb{R}^2}\varphi(x)\left(e^{-i\phi_\xi^-(x)}-e^{-i\phi^+(x)}\right)\,dx\right|\leq C\|\nabla\phi_\xi^\pm\|_{C^2}^2\|\varphi\|_{W^{2,1}}\left(1+\frac{\|\nabla \phi_\xi^\pm\|_{C^1}}{k}\right)^6k^{-4}.$$
Here $\|\nabla\phi^\pm\|_{C^n}:=\max\left\{\|\nabla\phi^-\|_{C^n},\|\nabla\phi^+\|_{C^n}\right\}$ and $C>0$ depends on $c_0$, $r_0$.  Moreover, we have $$\|\nabla\phi_\xi^\pm\|_{C^2}\leq|\xi|\|\nabla X^{t_0}\|_{C^2}\leq \frac{1}{4}kL\|\nabla X^{t_0}\|_{C^2}.$$
Note that the derivatives of $\nabla X^{t_0}$ can be controlled by derivatives of $\omega$, which we can control by the same derivatives of $\omega_0$ by arguments similar to those used in the proofs of Lemmas \ref{nabla w L4 bound} and \ref{W H2 bound part 2}. Thus we obtain $$|F(\xi)|\leq C_1k^{-2}\quad \text{ for all } \xi\in B_{\frac{1}{4}kL}(0),$$ where $C_1>0$ is independent of $k$. It now follows that
\begin{equation}\label{lowhighfreq}
\|F\mathds{1}_{|\xi|\leq \frac{1}{4}kL}\|_{L^2}\lesssim k^{-1}.
\end{equation}
Moreover, since $X^{-t}$ is measure-preserving, an application of Plancherel's identity gives 
\begin{equation}\label{allfreq}
\begin{split}
    & \|F\|_{L^2}^2= \left(2\pi k \log^{\alpha}(k+1)\sqrt{L} \|\beta\|_{L^2} \right)^2=4\pi^2\int_{\text{supp}(\varphi)}|\sin(kx_1)\varphi(x)|^2\, dx 
    \\ &\qquad\geq\frac{16\pi^2}{\delta^2}\int_{|x-x_0|<\delta/2}\left(\frac{1}{2}-\frac{1}{2}\cos(2kx_1) \right) \, dx \\
    &\qquad = 2\pi^2 - \frac{16\pi^2}{\delta^2} \int_{|x-x_0|<\delta/2} \frac{1}{2}\cos(2kx_1) \, dx
    \geq 2\pi^2- C\frac{1}{k}
\end{split}
\end{equation} for some constant $C>0$ which depends on the support of $\varphi$.  In view of (\ref{allfreq}) and (\ref{lowhighfreq}) we have, for sufficiently large $k$, \begin{equation}\label{F midfrequency L2}
    \begin{split}
        \|F\mathds{1}_{|\xi|\geq\frac{1}{4} kL}\|_{L^2} &\geq \|F\|_{L^2}-\|F\mathds{1}_{|\xi|\leq\frac{1}{4}kL}\|_{L^2}
        \geq 4.
    \end{split}
\end{equation}
We now show that $F$ adequately approximates $k\log^{\alpha}(k+1)\sqrt{L}p$ in $L^2$ norm.  Define $$\eta = k\log^{\alpha}(k+1)\sqrt{L}(p-q).$$ Then $\eta$ solves 
\begin{equation}\label{eta}
\begin{cases}\partial_t\eta+k\log^{\alpha}(k+1)\sqrt{L}(\widetilde{u}-u)\cdot \nabla q+\widetilde{u}\cdot \nabla\eta =0, \\ \eta|_{t=0}=0.\end{cases}
\end{equation}
Multiplying $(\ref{eta})_1$ by $\eta$ and integrating over $\mathbb{R}^2$ gives 
\begin{equation}\label{eta L2 growth rate}\frac{d}{dt}\|\eta\|_{L^2}^2\leq 2k\log^{\alpha}(k+1)\sqrt{L}\|\widetilde{u}-u\|_{L^4}\|\nabla q\|_{L^4}\|\eta\|_{L^2}, 
\end{equation}
where we used that div $\widetilde{u}=0$ to conclude that $\int_{\mathbb{R}^2}\eta\widetilde{u}\cdot\nabla\eta \, dx = 0$. We estimate the terms on the right hand side of (\ref{eta L2 growth rate}).  Recall that by (\ref{uomegabound}) and (\ref{L2 estimate of vorticity difference1}), 
\begin{equation}\label{velocity error L4 estimate}
\|\widetilde{u}-u\|_{L^4}\lesssim\|\widetilde{\omega}-\omega\|_{L^2}\lesssim\|\widetilde{\omega_0}-\omega_0\|_{L^2}\lesssim \frac{1}{k\log^\alpha(k+1)}.
\end{equation}
Moreover, since $q$ solves $$\partial_tq+u\cdot\nabla q =0,\quad q|_{t=0}=\beta,$$ we apply an argument identical to that used to derive (\ref{nablaomega}) below to obtain $$\frac{d}{dt}\|\nabla q\|_{L^4}\lesssim \|\nabla u\|_{L^\infty}\|\nabla q\|_{L^4}.$$ Gr\"onwall's inequality provides, for some constant $c>0$ independent of $k$, 
\begin{equation}\label{nablaq}
\|\nabla q\|_{L^4}\lesssim \|\nabla \beta\|_{L^4}\exp\left\{c\int_0^{t_0}\|\nabla u(s)\|_\infty\, ds\right\}.
\end{equation}
By the embedding $W^{1,4}(\mathbb{R}^2)\hookrightarrow C^0_B(\mathbb{R}^2)$, the boundedness of Calderon-Zygmund operators on $L^4(\mathbb{R}^2)$, and the conservation of $L^p$ norms of the vorticity, we have the series of inequalities $$\|\nabla u\|_\infty\lesssim\|\nabla u\|_{W^{1,4}}\lesssim \|\omega\|_{W^{1,4}}=\|\omega_0\|_{L^4}+\|\nabla\omega\|_{L^4}.$$
Substituting this estimate into (\ref{nablaq}) and applying Lemma \ref{nabla w L4 bound} below, we obtain 
\begin{equation}\label{nabla q L4 estimate}\|\nabla q\|_{L^4}\lesssim\|\nabla \beta\|_{L^4}.
\end{equation}
Now by arguing as we did to obtain (\ref{beta H1 estimate}), we have 
$$\|\nabla \beta\|_{L^4}\lesssim \log^{-\alpha}(k+1)L^{-1/2},$$
which, when substituted into (\ref{nabla q L4 estimate}), gives
\begin{equation}\label{nabla q est2}\|\nabla q\|_{L^4}\lesssim\log^{-\alpha}(k+1)L^{-1/2}.
\end{equation}
Substituting (\ref{velocity error L4 estimate}) and (\ref{nabla q est2}) into (\ref{eta L2 growth rate}), integrating in time, and using that $\eta|_{t=0}=0$, we conclude that \begin{equation}\label{scaled p-q estimate}
    \|\eta(t_0)\|_{L^2}\lesssim \log^{-\alpha}(k+1).
\end{equation}
Thus, by (\ref{F midfrequency L2}), (\ref{scaled p-q estimate}), and Plancherel's identity, we may take $k$ sufficiently large to obtain \begin{equation}\label{p frequency estimate}
\begin{split}
&k\log^{\alpha}(k+1)L\|\widehat{p}(t_0)\mathds{1}_{|\xi|\geq\frac{1}{4}kL}\|_{L^2}\geq k\log^{\alpha}(k+1)L\|\widehat{q}(t_0)\mathds{1}_{|\xi|\geq\frac{1}{4} kL}\|_{L^2}-2\pi\sqrt{L}\|\eta(t_0)\|_{L^2}
\\ &\qquad\qquad= \sqrt{L}\left( \|F\mathds{1}_{|\xi|\geq\frac{1}{4} kL}\|_{L^2}-2\pi\|\eta(t_0)\|_{L^2}\right)\geq \sqrt{L}\left(\|F\mathds{1}_{|\xi|\geq\frac{1}{4} kL}\|_{L^2}- 2\right)
\geq 2\sqrt{L}.
\end{split}
\end{equation}
Finally, we are able to estimate the $H^{1,\alpha}$ norm of $p$ as follows: 
\begin{equation}\notag
\begin{split}    &\|p(t_0)\|_{H^{1,\alpha}}^2=\int_{\mathbb{R}^2}|\xi|^2\log^{2\alpha}(|\xi|+1)|\widehat{p}(\xi,t_0)|^2\, d\xi
    \\ &\qquad\geq \frac{1}{16}k^2L^2\log^{2\alpha}\left(\frac{1}{4}kL+1\right)\int_{|\xi|\geq\frac{1}{4} kL} |\widehat{p}(\xi,t_0)|^2\, d\xi
    \\ &\qquad \geq \frac{1}{16}\left(k\log^{\alpha}(k+1)L\|\widehat{p}(t_0)\mathds{1}_{|\xi|\geq \frac{1}{4} kL}\|_{L^2}\right)^2
    \geq \frac{1}{4}L,
\end{split}
\end{equation} where we used that $\frac{1}{4}kL>k$ to obtain the second inequality and (\ref{p frequency estimate}) to obtain the third inequality.  Thus, (\ref{plarge}) holds.

This completes the proof of Theorem \ref{localnorminflationtheorem}.
    \end{proof}

In the proof of Theorem \ref{localnorminflationtheorem} above, we made use of the following three lemmas.

\begin{lemma}\label{nabla w L4 bound}
Assume $\omega$ is a smooth solution to (\ref{VorticityEquation}) with initial data $\omega_0\in C^{\infty}_c(\mathbb{R}^2)$.  Then there exists a constant $C_1>0$ such that   
$$\max_{0\leq t\leq 1}\|\nabla \omega(t)\|_{L^4} \leq C_1.$$
Here $C_1$ depends only on (and increases with respect to) $\|\omega_0\|_{\infty}$, $\|\nabla\omega_0\|_{L^4}$, and the measure of the support of $\omega_0$, 
\end{lemma}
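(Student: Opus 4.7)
\textbf{Proof plan for Lemma \ref{nabla w L4 bound}.} The plan is a standard energy-type estimate for $\nabla\omega$ in $L^4$, closed via a logarithmic Sobolev inequality of Beale--Kato--Majda type. First I would differentiate the vorticity equation $\partial_t\omega+u\cdot\nabla\omega=0$ to obtain, for each component,
\begin{equation*}
\partial_t\partial_i\omega+u\cdot\nabla\partial_i\omega=-\partial_i u\cdot\nabla\omega.
\end{equation*}
Multiplying by $|\nabla\omega|^2\partial_i\omega$, summing in $i$, integrating over $\mathbb{R}^2$, and using $\nabla\cdot u=0$ to kill the transport term, I obtain
\begin{equation*}
\frac{d}{dt}\|\nabla\omega\|_{L^4}^4\leq C\|\nabla u\|_{L^\infty}\|\nabla\omega\|_{L^4}^4,
\end{equation*}
and hence
\begin{equation*}
\frac{d}{dt}\|\nabla\omega\|_{L^4}\leq C\|\nabla u\|_{L^\infty}\|\nabla\omega\|_{L^4}.
\end{equation*}

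Next I would control $\|\nabla u\|_{L^\infty}$ by the classical logarithmic Sobolev (BKM-type) inequality
\begin{equation*}
\|\nabla u\|_{L^\infty}\leq C\bigl(1+\|\omega\|_{L^2}+\|\omega\|_{L^\infty}\log\bigl(e+\|\nabla\omega\|_{L^4}\bigr)\bigr),
\end{equation*}
which is applicable here since $W^{1,4}(\mathbb{R}^2)\hookrightarrow L^\infty(\mathbb{R}^2)$ barely fails and the logarithmic correction captures the borderline embedding. The quantities $\|\omega(t)\|_{L^2}$ and $\|\omega(t)\|_{L^\infty}$ are conserved along the Lagrangian flow because $\omega$ is transported by the divergence-free velocity; moreover $\|\omega_0\|_{L^2}\leq\|\omega_0\|_{L^\infty}|\mathrm{supp}(\omega_0)|^{1/2}$, so both are controlled by the data listed in the statement.

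Combining the two estimates gives
\begin{equation*}
\frac{d}{dt}\|\nabla\omega\|_{L^4}\leq C\bigl(1+\log\bigl(e+\|\nabla\omega\|_{L^4}\bigr)\bigr)\|\nabla\omega\|_{L^4},
\end{equation*}
with $C$ depending only on $\|\omega_0\|_{L^\infty}$, $\|\omega_0\|_{L^2}$ (hence on the support of $\omega_0$). Setting $y(t):=\log(e+\|\nabla\omega(t)\|_{L^4})$, one checks that $y'\leq C(1+y)$, so Gr\"onwall's inequality yields at worst double exponential growth of $\|\nabla\omega(t)\|_{L^4}$ in $t$. On the interval $[0,1]$ this gives a finite bound $C_1$ depending only on $\|\nabla\omega_0\|_{L^4}$, $\|\omega_0\|_{L^\infty}$, and $|\mathrm{supp}(\omega_0)|$, as claimed.

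The only nontrivial ingredient is the log-Sobolev inequality for $\|\nabla u\|_{L^\infty}$; this is classical (see e.g.\ Beale--Kato--Majda) and I would simply cite it. The rest is routine: a standard $L^p$ energy estimate with the transport term vanishing by incompressibility, followed by a Gr\"onwall argument on the scalar ODE.
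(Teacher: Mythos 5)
Your proposal matches the paper's proof essentially verbatim: differentiate the vorticity equation, test against $|\nabla\omega|^2\partial_i\omega$, kill the transport term via incompressibility, invoke the Beale--Kato--Majda logarithmic Sobolev inequality for $\|\nabla u\|_{L^\infty}$, and close with a Gr\"onwall argument yielding double exponential growth on $[0,1]$. The approach and all key ingredients are the same.
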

\begin{proof}
The proof is standard, so we only outline the argument.  Applying $\partial_j$ for $j=1,2$ to the vorticity equation, multiplying the resulting equation by $|\partial_j\omega|^2\partial_j\omega$, and integrating over $\mathbb{R}^2$ gives 
\begin{equation*}
\frac{1}{4}\frac{d}{dt}\|\nabla \widetilde{\omega}\|^4_{L^4} + \int_{\mathbb{R}^2} |\partial_j\omega|^2\partial_j\omega (u\cdot\nabla\partial_j\omega) \, dx + \int_{\mathbb{R}^2} |\partial_j\omega|^2\partial_j\omega (\partial_j u \cdot\nabla\omega) \, dx =0.
\end{equation*}
Since $u$ is divergence free, the second term on the left-hand side of the above equality is $0$, while the third term can be bounded using Holder's inequality.  This gives
\begin{equation}\label{nablaomega}
\frac{d}{dt}\|\partial_j {\omega}\|^4_{L^4}\leq C\|\nabla {u}\|_\infty\|\nabla {\omega}\|^4_{L^4}.
\end{equation}
We now substitute the Beale-Kato-Majda logarithmic-Sobolev inequality \cite{bealekatomajda84}, given by 
$$\|\nabla{u}\|_\infty\leq C(1+\|{\omega_0}\|_2+\|{\omega_0}\|_\infty\log\left(2+\|\nabla {\omega}\|_{L^4} \right)),$$ 
into (\ref{nablaomega}).  Applying a standard argument, one can conclude that for each $t>0$,
$$ \| \nabla\omega(t) \|_{L^4} \leq Ce^{e^{Ct\|\omega_0\|_{\infty}}}(1+ \|\nabla\omega_0\|_{L^4})^{e^{Ct\|\omega_0\|_{\infty} }}.$$
Taking the maximum over $t\in[0,1]$ yields the result.
\Obsolete{ 
Hence
    \begin{equation}\label{L4 Gradient perturbed vorticity estimate} 
        \max_{0\leq t\leq 1}\|\nabla \widetilde{\omega}\|_{L^4}\lesssim \|\nabla \widetilde{\omega_0}\|_{L^4}+\|\omega_0\|_\infty.
    \end{equation}
     Note, however, that by the definition of $\beta$, $\|\nabla\beta\|_{L^4}\lesssim\log^{-\alpha}(k+1)$, and thus the implied constant in (\ref{L4 Gradient perturbed vorticity estimate}) can be taken independent of $k$ and only dependent on $\omega_0.$ Similarly, we have \begin{equation}\label{L4 Gradient vorticity estimate}
         \max_{0\leq t\leq 1}\|\nabla \omega\|_{L^4} \lesssim \|\nabla\omega_0\|_{L^4}+\|\omega_0\|_\infty.
     \end{equation}  }
\end{proof}

\begin{lemma}\label{W H2 bound part 2}
Assume $\tilde{\omega}$ is a smooth solution to (\ref{VorticityEquation}) with corresponding velocity $\tilde{u}$ and initial data $\tilde{\omega}_0 = \omega_0 + \beta$, where $\omega_0,\beta \in C^{\infty}_c(\mathbb{R}^2)$.  Further assume $W$ is a smooth solution to the system 
\begin{equation*} 
\begin{cases}
    \partial_t W+\tilde{u}\cdot\nabla W = 0 \quad &\text{on }\mathbb{R}^2\times(0,\infty),
    \\ \tilde{u} = \nabla^\perp(-\Delta)^{-1}\tilde{\omega}  &\text{on }\mathbb{R}^2\times[0,\infty),
    \\ W|_{t=0} =\omega_0 &\text{on }\mathbb{R}^2.
\end{cases}
\end{equation*}
Then there exists a constant $C_0>0$ such that
$$\max_{0\leq t\leq 1}\|W(t)\|_{H^2}\leq C_0.$$
Here $C_0$ depends only on (and increases with respect to) $\|\omega_0\|_{\infty}$, $\|\beta\|_{\infty}$, $\|\omega_0\|_{W^{1,4}}$, $\|\beta\|_{W^{1,4}}$, and the measure of the support of $\omega_0$ and $\beta$.
\end{lemma}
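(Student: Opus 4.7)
The plan is to run a standard $H^2$ energy estimate on the transport equation satisfied by $W$, using incompressibility of $\tilde u$ to kill the transport term and then controlling the coefficients by applying Lemma \ref{nabla w L4 bound} to $\tilde \omega$ itself. Since $W$ is transported by the divergence-free field $\tilde u$, one immediately has $\|W(t)\|_{L^2}=\|\omega_0\|_{L^2}$ for all $t\ge 0$, so it suffices to control $\|\nabla^2 W\|_{L^2}$.

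For the top order, apply a second-order derivative $\partial^\sigma$ with $|\sigma|=2$ to the equation to obtain
$$\partial_t\partial^\sigma W+\tilde u\cdot\nabla\partial^\sigma W=-[\partial^\sigma,\tilde u\cdot\nabla]W.$$
Multiply by $\partial^\sigma W$, integrate over $\mathbb{R}^2$, discard the transport integral using $\nabla\cdot\tilde u=0$, and expand the commutator by the Leibniz rule. Hölder's inequality yields
$$\frac{d}{dt}\|\nabla^2 W\|_{L^2}^2\lesssim \|\nabla \tilde u\|_{L^\infty}\|\nabla^2 W\|_{L^2}^2+\|\nabla^2\tilde u\|_{L^4}\|\nabla W\|_{L^4}\|\nabla^2 W\|_{L^2},$$
and Sobolev embedding in dimension two gives $\|\nabla W\|_{L^4}\lesssim\|W\|_{H^2}$.

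The coefficients in front of $\|W\|_{H^2}^2$ are bounded uniformly on $[0,1]$ by quantities depending only on $\tilde\omega_0$. Indeed, the Calderón–Zygmund boundedness of $\nabla^2(-\Delta)^{-1}$ on $L^4$ gives $\|\nabla^2\tilde u\|_{L^4}\lesssim\|\nabla\tilde\omega\|_{L^4}$, while the $2$-D Sobolev embedding $W^{1,4}\hookrightarrow L^\infty$ together with Calderón–Zygmund gives
$$\|\nabla\tilde u\|_{L^\infty}\lesssim\|\nabla\tilde u\|_{W^{1,4}}\lesssim\|\tilde\omega\|_{L^4}+\|\nabla\tilde\omega\|_{L^4}.$$
Because $\tilde\omega$ is transported by an incompressible flow, $\|\tilde\omega(t)\|_{L^4}=\|\tilde\omega_0\|_{L^4}$. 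Lemma \ref{nabla w L4 bound} applied to $\tilde\omega$ (which is admissible since $\tilde\omega_0=\omega_0+\beta\in C^\infty_c(\mathbb{R}^2)$) yields a constant $\tilde C_1$ with $\max_{0\le t\le 1}\|\nabla\tilde\omega(t)\|_{L^4}\le \tilde C_1$, and $\tilde C_1$ depends only on $\|\tilde\omega_0\|_{L^\infty}$, $\|\nabla\tilde\omega_0\|_{L^4}$, and the measure of $\operatorname{supp}(\tilde\omega_0)$.

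Combining these estimates collapses the differential inequality into $\frac{d}{dt}\|W\|_{H^2}^2\le C\|W\|_{H^2}^2$ with $C$ depending only on the quantities above. Grönwall's inequality on $[0,1]$ then yields $\max_{0\le t\le 1}\|W(t)\|_{H^2}\le C_0$, as claimed. There is no real analytic obstacle here; the only thing to watch is bookkeeping the constants in terms of the norms in the statement, which is immediate from $\|\tilde\omega_0\|_{L^p}\le\|\omega_0\|_{L^p}+\|\beta\|_{L^p}$, the analogous bound for $\|\nabla\tilde\omega_0\|_{L^4}$, and the inclusion $\operatorname{supp}(\tilde\omega_0)\subset\operatorname{supp}(\omega_0)\cup\operatorname{supp}(\beta)$, so the dependence is monotone in the listed quantities.
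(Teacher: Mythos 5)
Your proposal is correct and follows essentially the same route as the paper: an $H^2$ energy estimate on the transport equation, expansion of the commutator, Calder\'on--Zygmund bounds on $\nabla^2\tilde u$ and $\nabla\tilde u$, Lemma \ref{nabla w L4 bound} applied to $\tilde\omega$ to control the coefficients, and Gr\"onwall. The only cosmetic differences are that you run the top-order estimate only at $|\sigma|=2$ and recover $\|\nabla W\|_{L^2}$ by conservation of $\|W\|_{L^2}$ and interpolation, whereas the paper sums over all $|\gamma|\le 2$; both handle the bookkeeping of constants identically.
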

\begin{proof} Since $W$ satisfies the transport equation $$\partial_tW+\widetilde{u}\cdot \nabla W=0,$$ letting $\gamma$ denote a multi-index with $|\gamma|\leq 2$ and applying $\partial^\gamma$ gives $$\partial_t \partial^\gamma W+\widetilde{u}\cdot\nabla \partial^\gamma W=[\widetilde{u}\cdot\nabla, \partial^\gamma]W.$$ Multiplying by $D^\gamma W$ and integrating over $\mathbb{R}^2$ gives $$\frac{1}{2}\frac{d}{dt}\|\partial^\gamma W\|_{L^2}^2=\int_{\mathbb{R}^2}\partial^\gamma W[\widetilde{u}\cdot\nabla,\partial^\gamma ]W\, dx.$$ The triangle and H\"older's inequalities imply that for all $\gamma$ satisfying $|\gamma|\leq 2$, $$\frac{1}{2}\frac{d}{dt}\|\partial^\gamma W\|_{L^2}^2\leq C\|\partial^\gamma W\|_{L^2}\left(\|\widetilde{u}\|_{W^{2,4}}\|\nabla W\|_{L^4}+\|\nabla \widetilde{u}\|_\infty\| W\|_{H^2}\right).$$
Combining this estimate with the conservation $\|W(t)\|_{L^2}=\|\omega_0\|_{L^2}$ for all $t\geq 0$ and summing over $\gamma$ with $|\gamma|\leq 2$, we conclude that
  \begin{equation}\label{W H2 growth rate}
  \frac{d}{dt}\|W\|_{H^2}\leq C(\|\widetilde{u}\|_{W^{2,4}}\|\nabla W\|_{L^4}+\|\nabla \widetilde{u}\|_\infty\| W\|_{H^2}).
  \end{equation} 
By the boundedness properties of Calderon-Zygmund operators and Lemma \ref{nabla w L4 bound}, $$\| \tilde{u}(t)\|_{W^{2,4}}\leq \|\tilde{\omega}(t)\|_{L^4} + \|\nabla\tilde{\omega}(t)\|_{L^4}\leq \|\tilde{\omega}_0\|_{L^4} +Ce^{e^{Ct\|\tilde{\omega}_0\|_{\infty}}}(1+ \|\nabla\tilde{\omega}_0\|_{L^4})^{e^{Ct\|\tilde{\omega}_0\|_{\infty} }}$$ for each $t\in[0,1]$. Also, an application of Sobolev's inequality, boundedness of Calderon-Zygmund operators, and Lemma \ref{nabla w L4 bound} gives 
\begin{equation*}
\begin{split}
&\|\nabla W(t)\|_{L^4}\leq C\|W(t)\|_{H^2},\\
&\|\nabla \tilde{u}(t)\|_\infty\leq C\|\nabla\tilde{u}(t)\|_{W^{1,4}}\leq C\|\tilde{\omega}(t)\|_{W^{1,4}}\leq \|\tilde\omega_0\|_{L^4} + Ce^{e^{Ct\|\tilde\omega_0\|_{\infty}}}(1+ \|\nabla\tilde\omega_0\|_{L^4})^{e^{Ct\|\tilde\omega_0\|_{\infty} }}.
\end{split}
\end{equation*}
Substituting these bounds into (\ref{W H2 growth rate}) and applying Gr\"onwall's inequality provides 
\begin{equation}\label{W H2 max}
    \max_{0\leq t\leq 1}\|W\|_{H^2}\leq C_0,
\end{equation} where $C_0>0$ is a constant dependent only on $\|\omega_0\|_{\infty}$, $\|\beta\|_{\infty}$, $\|\omega_0\|_{W^{1,4}}$, $\|\beta\|_{W^{1,4}}$, and the supports of $\omega_0$ and $\beta$.  
\end{proof}

\begin{lemma}\label{W H2 bound}
Assume $\omega$ is a smooth solution to (\ref{VorticityEquation}) with initial data $\omega_0\in C^{\infty}_c(\mathbb{R}^2)$.  Then there exists a constant $C_0>0$ such that
$$\max_{0\leq t\leq 1}\|\omega(t)\|_{H^2}\leq C_0.$$
Here $C_0$ depends only on (and increases with respect to) $\|\omega_0\|_{\infty}$, $\|\omega_0\|_{W^{1,4}}$, and the measure of the support of $\omega_0$.
\end{lemma}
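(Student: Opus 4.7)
The plan is to mirror the energy argument from Lemma \ref{W H2 bound part 2}, with the simplification that here the transporting velocity $u$ is generated directly by $\omega$ itself (rather than by a perturbed vorticity). In particular, we apply $\partial^\gamma$ to the vorticity equation $\partial_t\omega+u\cdot\nabla\omega=0$ for each multi-index $\gamma$ with $|\gamma|\le 2$, multiply by $\partial^\gamma\omega$, and integrate over $\mathbb{R}^2$. The divergence-free condition on $u$ kills the transport term, and H\"older's inequality on the resulting commutator $[u\cdot\nabla,\partial^\gamma]\omega$ yields, after summing over $|\gamma|\le 2$ and using the conservation $\|\omega(t)\|_{L^2}=\|\omega_0\|_{L^2}$,
\begin{equation*}
\frac{d}{dt}\|\omega\|_{H^2}\leq C\bigl(\|u\|_{W^{2,4}}\|\nabla\omega\|_{L^4}+\|\nabla u\|_\infty\|\omega\|_{H^2}\bigr).
\end{equation*}

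Next, I would control the velocity norms by norms of $\omega$ using boundedness of Calder\'on--Zygmund operators on $L^4$ and the Sobolev embedding $W^{1,4}(\mathbb{R}^2)\hookrightarrow L^\infty(\mathbb{R}^2)$:
\begin{equation*}
\|u\|_{W^{2,4}}\lesssim \|\omega\|_{L^4}+\|\nabla\omega\|_{L^4},\qquad \|\nabla u\|_\infty\lesssim \|\nabla u\|_{W^{1,4}}\lesssim \|\omega\|_{L^4}+\|\nabla\omega\|_{L^4}.
\end{equation*}
Since $\|\omega(t)\|_{L^4}\le \|\omega_0\|_{L^4}$ is conserved along the flow and $\|\nabla\omega(t)\|_{L^4}$ is bounded uniformly on $[0,1]$ by Lemma \ref{nabla w L4 bound} in terms of $\|\omega_0\|_\infty$, $\|\nabla\omega_0\|_{L^4}$, and the support of $\omega_0$, both factors are uniformly bounded on $[0,1]$ by a constant $M$ depending only on the stated data. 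Thus $\|\nabla\omega\|_{L^4}\le M$ and $\|\nabla u\|_\infty\le M$ on $[0,1]$.

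Substituting these bounds into the differential inequality gives $\tfrac{d}{dt}\|\omega\|_{H^2}\le CM\|\omega\|_{H^2}+CM^2$, and Gr\"onwall's lemma on $[0,1]$ produces the desired uniform bound $\max_{0\le t\le 1}\|\omega(t)\|_{H^2}\le C_0$, with $C_0$ depending only on $\|\omega_0\|_\infty$, $\|\omega_0\|_{W^{1,4}}$, and the measure of $\mathrm{supp}(\omega_0)$. There is no real obstacle here; the argument is a direct specialization of the proof of Lemma \ref{W H2 bound part 2} with $\beta\equiv 0$, and the only point requiring care is tracking that all velocity-side quantities can be dominated via Calder\'on--Zygmund theory by the $W^{1,4}$-bound on $\omega$ already supplied by Lemma \ref{nabla w L4 bound}.
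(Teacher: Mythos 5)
Your proposal is correct and takes essentially the same approach as the paper: the paper's own proof of this lemma simply states that it is virtually identical to that of Lemma \ref{W H2 bound part 2} with the details omitted, and your argument is precisely that specialization with $\beta\equiv 0$.
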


\begin{proof}
The proof is virtually identical to that of Lemma \ref{W H2 bound part 2}, so we omit the details.
\end{proof}

We spend the remainder of this section constructing initial data that both satisfies the conditions of Theorem $\ref{localnorminflationtheorem}$ and is small in $H^{1,\alpha}(\mathbb{R}^2)$. The initial data resulting from our construction will serve as a sort of building block for the initial vorticity used to generate strong ill-posedness in the proof of the Theorem \ref{main}. 
\begin{theorem}\label{largedeformationexistence}
Given $\epsilon>0$, $\tau>0$, and $\Gamma>0$, there exists $f\in C^{\infty}_{c}(\mathbb{R}^2)$ such that\\
\\
(i) $f(x_1,x_2)$ is odd in $x_1$ and $x_2$ with $f(x_1,x_2)\geq 0$ if $x_1,x_2\geq 0$, \\
(ii) $\| f \|_{\infty} + \| f \|_{H^{1,\alpha}} <\epsilon$,\\
(iii) the smooth, compactly supported solution $\omega$ to (\ref{VorticityEquation}) generated from initial data $\omega_0=f$ has corresponding particle trajectory map $X^t$ satisfying $$\max_{0\leq t\leq \tau} \|\nabla X^t\|_{L^{\infty}(B_{R}(0))} > \Gamma,$$ where $R>0$  satisfies $supp(\omega_0)\subset B_R(0)$.
\end{theorem}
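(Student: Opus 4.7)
My plan is to apply the hyperbolic-flow estimate of Bourgain and Li (Proposition~3.5 of \cite{bourgainli15}), which states that if $\omega_0 \in C^\infty_c(\mathbb{R}^2)$ is odd in each variable and nonnegative on $\{x_1,x_2\geq 0\}$, then
\[
\max_{0 \leq s \leq \tau} \|\nabla X^s\|_\infty \;\geq\; \left(\frac{cB\tau}{\log(1+cB\tau)}\right)^{1/4},\qquad B := \int_{x_1,x_2\geq 0}\omega_0(x)\,\frac{x_1 x_2}{|x|^4}\,dx,
\]
for a constant $c>0$ independent of $\omega_0$. The task thus reduces to producing $f$ satisfying $(i)$ with $\|f\|_\infty+\|f\|_{H^{1,\alpha}}<\varepsilon$ and $B(f)$ arbitrarily large. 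I construct $f$ as a dyadic superposition
\[
f \;:=\; \sum_{j=1}^N \mu_j\, g_j, \qquad g_j(x) := g(2^j x), \qquad \mu_j := c_0\, j^{-2\alpha},
\]
where $g\in C^\infty_c(\mathbb{R}^2)$ is a fixed template supported in the annulus $\{1\leq|x|\leq 2\}$, odd in each variable, nonnegative on $\{x_1,x_2\geq 0\}$, and chosen so that $B_0:=B(g)>0$. Each $g_j$ is supported in the disjoint dyadic shell $\{2^{-j}\leq |x|\leq 2^{-j+1}\}$, so $f$ inherits the symmetry from $g$ and satisfies $\|f\|_\infty\leq c_0\|g\|_\infty$; a change of variables $y=2^j x$ shows that $B(g_j)=B_0$ for every $j$, whence $B(f)=c_0 B_0 \sum_{j=1}^N j^{-2\alpha}$.

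The central estimate is
\[
|f|_{H^{1,\alpha}}^2 \;\lesssim\; c_0^2 \sum_{j=1}^N j^{-2\alpha},
\]
which I would establish using the real-variable equivalent seminorm \eqref{real variable definition for s=1}. The Gagliardo double integral splits by the dyadic shell memberships of $x$ and $y$. For a single $g_j$, the dominant contribution comes from $x\in \mathrm{supp}(g_j)$ paired with $y\notin\mathrm{supp}(g_j)$ with $|x-y|<1/2$: the radial integral $\int_{2^{-j}}^{1/2}(r\log^{1-2\alpha}(1/r))^{-1}\,dr\sim j^{2\alpha}/(2\alpha)$, combined with $|\nabla g_j|^2\lesssim 4^j$ on a set of measure $\sim 4^{-j}$, gives $|g_j|_{H^{1,\alpha}}^2\sim j^{2\alpha}$, producing a scale-$j$ contribution to $|f|^2_{H^{1,\alpha}}$ of order $\mu_j^2 j^{2\alpha}=c_0^2/j^{2\alpha}$. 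Cross-shell pairs with $j_0<k_0$ have $|x-y|\sim 2^{-j_0}$ and $\log(1/|x-y|)\sim j_0$; a direct computation produces a pair contribution of order $(\mu_{j_0}^2\,4^{j_0-k_0}+\mu_{k_0}^2)/j_0^{1-2\alpha}$, which summed first in $k_0$ and then in $j_0$ (or vice versa) again yields an overall bound of the form $c_0^2\sum j^{-2\alpha}$. The $L^2$-part is trivial since $\|g_j\|_{L^2}^2\sim 4^{-j}$, so $\|f\|_{L^2}\lesssim c_0$.

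Choosing $c_0 \asymp \varepsilon/\sqrt{\sum_{j=1}^N j^{-2\alpha}}$ ensures $\|f\|_\infty+\|f\|_{H^{1,\alpha}}<\varepsilon$, and then $B(f)\asymp \varepsilon\sqrt{\sum_{j=1}^N j^{-2\alpha}}$, which diverges as $N\to\infty$ precisely because $\sum j^{-2\alpha}=\infty$ throughout $\alpha\in(0,1/2]$. Since $f\in C^\infty_c$ the corresponding Yudovich solution $\omega$ is smooth and compactly supported on $[0,\tau]$, and for $N$ sufficiently large the Bourgain--Li estimate yields $\max_{0\leq t\leq \tau}\|\nabla X^t\|_\infty>\Gamma$. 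The large deformation is generated by the hyperbolic structure at the origin inside $\mathrm{supp}(f)\subset B_R(0)$, while the decay of $\nabla u$ at infinity forces $\nabla X^t$ to stay close to the identity outside a bounded region; this standard localization transfers the bound to $\|\nabla X^t\|_{L^\infty(B_R(0))}>\Gamma$ and establishes $(iii)$.

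The main technical obstacle is the scale-by-scale Gagliardo bookkeeping of the preceding paragraph, and in particular the verification that cross-shell interactions do not exceed the single-shell order $j^{2\alpha}$. The exponent $2\alpha$ in $\mu_j$ is chosen precisely so that the same series $\sum j^{-2\alpha}$ governs both $|f|_{H^{1,\alpha}}^2/c_0^2$ and $B(f)/(c_0 B_0)$, which forces the threshold $\alpha\leq 1/2$ through the (logarithmic) divergence of the harmonic sum.
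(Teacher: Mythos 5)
Your construction achieves the theorem and mirrors the paper's strategy in its essentials: both build a dyadic superposition of a fixed odd-odd template rescaled to shells of radius $\sim 2^{-j}$, weighted so that the Bourgain--Li strain integral $B(f)$ grows while $|f|_{H^{1,\alpha}}$ stays bounded, and both hinge on the divergence of $\sum j^{-2\alpha}$ for $\alpha\le 1/2$. The technical choices differ, though. The paper's template $\eta$ is supported in four tiny balls near $(\pm 1,\pm1)$ (rather than a full annulus), so that the supports of $\eta(2^k\cdot)$ and $\eta(2^j\cdot)$ are not merely nested but pairwise separated in a way that makes the cross-shell Gagliardo integrals cleaner; the paper also fixes the amplitude at $(\sqrt M\log M)^{-1}$ and instead shifts the summation range to $[a_{\alpha,M},b_{\alpha,M}]$ (with $a_{\alpha,M}\ge M$) so that $\sum_{k=a}^{b}k^{-2\alpha}\asymp M$ from both sides. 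Your route — sum from $j=1$ to $N$ and tune the free prefactor $c_0$ at the end — is arguably more transparent, and your Gagliardo bookkeeping (single-shell contribution $\mu_j^2 j^{2\alpha}=c_0^2 j^{-2\alpha}$, cross-shell sum geometrically summable in the scale gap) checks out.

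One point you should not leave as a hand-wave: passing from the Bourgain--Li estimate on the global norm $\|\nabla X^s\|_\infty$ to the localized $\|\nabla X^s\|_{L^\infty(B_R(0))}$ that appears in the statement. The "decay of $\nabla u$ at infinity keeps $\nabla X^t$ near identity" heuristic is not, on its own, a proof that the maximum is attained in $B_R(0)$, and is unnecessary anyway. The paper's route is to \emph{re-derive} the Bourgain--Li lower bound directly in the local norm: one uses the odd-odd symmetry to show $\nabla u(0,t)$ is diagonal, writes $\nabla X^t(0)$ explicitly as a time-ordered exponential of that diagonal strain, and then uses
\begin{equation*}
\frac{X_1^t(x)X_2^t(x)}{|X^t(x)|^4}\ \geq\ \|\nabla X^t\|_{L^\infty(B_R(0))}^{-4}\,\frac{x_1 x_2}{|x|^4}\qquad (x\in \mathrm{supp}(f)\subset B_R(0))
\end{equation*}
to obtain the differential inequality $\|\nabla X^t\|_{L^\infty(B_R(0))}\ge |\nabla X^t(0)|\ge \exp\bigl\{\frac{4}{\pi}B(f)\int_0^t\|\nabla X^s\|_{L^\infty(B_R(0))}^{-4}\,ds\bigr\}$. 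This automatically localizes, since everything is driven by the deformation \emph{at the origin}, which lies in $B_R(0)$. If you prefer to cite Bourgain--Li Proposition~3.5 verbatim, you should at minimum note that its proof produces the bound at the origin and hence on any ball containing it; otherwise re-prove the local version as the paper does.
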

\begin{proof}
Let $\rchi\in C^\infty_c(\mathbb{R}^2)$ be a standard radial bump function such that $0\leq\rchi(x)\leq1$ for all $x\in\mathbb{R}^2$, $\rchi(x)=1$ for all $x\in B_{1/2}(0)$, and supp$(\rchi)\subset B_1(0)$. Define $$\eta(x_1,x_2)=\sum_{a_1,a_2\in\{-1,1\}}a_1a_2\rchi\left(64(x_1-a_1),64(x_2-a_2)\right).$$ For $M\in\mathbb{N}, M\geq 2$ define $$f_M = \frac{1}{\sqrt{M}\log M}\sum_{k=a_{\alpha,M}}^{b_{\alpha,M}}\frac{1}{k^{2\alpha}}\eta(2^k\cdot),$$ where $$(a_{\alpha,M},b_{\alpha,M})=\begin{cases}
\left(    \left\lfloor (2^M)^{\frac{1}{1-2\alpha}} \right\rfloor,\left\lceil (2^M+M)^{\frac{1}{1-2\alpha}}\right\rceil\right)\quad& 0<\alpha<\frac{1}{2},
\\ \left(M,2^M+M\right) &\alpha = \frac{1}{2}.

\end{cases}$$
Note that $f_M$ satisfies condition (i) of Theorem \ref{largedeformationexistence}.  To complete the proof, we show that given $\epsilon>0$, $\tau>0$, and $\Gamma>0$, one can choose $M$ sufficiently large so that $f_M$ also satisfies conditions (ii) and (iii).

For the remainder of this section, we will use the notation $A\lesssim_\alpha B$ when $A\lesssim B$ and the implied constant depends on $\alpha$, but not $M$.\\
\\
\noindent {\bf $f_M$ satisfies (iii)} We begin by showing $f_M$ satisfies (iii) for sufficiently large $M$. To do this, we take an approach similar to that used in \cite{bourgainli15} and \cite{kwon21}; specifically, we bound the Lagrangian deformation in the $L^\infty$-norm locally from below at positive times by an integral functional which approximates the initial velocity gradient at the origin. 

Note first that the odd-odd symmetry in $f_M$ gives rise to the same odd-odd symmetry in the solution $\omega$ to \eqref{VorticityEquation} with initial data $f_M$. Denoting by $X^t=(X_1^t,X_2^t)$ the particle trajectory map corresponding to this solution, we see that for all $x_1,x_2\in\mathbb{R}$ and for all $t>0$, 
\begin{equation}\label{invariant}
\begin{split}
&X^t_1(0,x_2) = X^t_2(x_1,0) = 0,\\
&X^t(0) = 0,
\end{split}
\end{equation}
so that the origin and axes are invariant under the flow. It follows that
\begin{equation}\label{X_i stays same sign}
    \text{sgn}(x_i)=\text{sgn}(X_i^t(x)),\qquad i=1,2, t>0, x\in\mathbb{R}^2.
\end{equation}
Moreover, if $u=(u_1,u_2)$ denotes the velocity field corresponding to $\omega$, then (\ref{invariant}) implies that for all $x_1,x_2\in\mathbb{R}$ and for all $t>0$,
\begin{equation}\label{u on axes}u_2(X_1^t(x_1,0),0,t)=0=u_1(0,X_2^t(0,x_2),t).
\end{equation}
Therefore, $\nabla u(0,t) $ is diagonal and satisfies $$\nabla u(0,t)=\begin{bmatrix}
    \left(-\partial^2_{x_1,x_2}(-\Delta)^{-1}\omega(t)\right)(0) &0\\ 0& \left(\partial^2_{x_1,x_2}(-\Delta)^{-1}\omega(t)\right)(0)
\end{bmatrix},$$ and
$$\nabla X^t(0) = \exp\left\{\int_0^t\begin{bmatrix}
    \left(-\partial^2_{x_1,x_2}(-\Delta)^{-1}\omega(s)\right)(0) &0\\ 0& \left(\partial^2_{x_1,x_2}(-\Delta)^{-1}\omega(s)\right)(0)
\end{bmatrix}\,ds\right\}.$$
Next, $X^t\circ X^{-t} = Id$ implies $\nabla X^t(X^{-t}(x))=(\nabla X^{-t})^{-1}(x)$ for all $x\in\mathbb{R}^2$. Thus, using $\det\nabla X^{-t} =1$, we have $$\|\nabla X^{-t}(\cdot)\|_{L^\infty(X^t(supp(f_M)))}=\|\nabla X^t(X^{-t}(\cdot))\|_{L^\infty(X^t(supp(f_M)))} = \|\nabla X^t(\cdot)\|_{L^\infty(supp(f_M)).}$$
 So, if $x\in supp(f_M)\subset B_R(0)$ with $x_1,x_2\geq 0$ and $t>0$, using \eqref{X_i stays same sign} we have 
\begin{equation}\label{bounds on x in supp fM}
\begin{split}
\frac{1}{\|\nabla X^{t}\|_{L^\infty(B_R(0))}}X_1^t(x)\leq x_1 \leq X_1^t(x)\|\nabla X^{t}\|_{L^\infty(B_{R}(0))},
\\ \frac{1}{\|\nabla X^{t}\|_{L^\infty(B_R(0))}}X_2^t(x)\leq x_2\leq X_2^t(x)\|\nabla X^t\|_{L^\infty(B_R(0
)).}
\end{split}
\end{equation}
Now we can estimate $|\nabla u (0,t)|$ from below by 
\begin{equation}\label{nabla u(0) lower bound}
    \begin{split}
        &|\nabla u(0,t)|_\infty=\frac{1}{2\pi}\int_{\mathbb{R}^2}\omega(x,t)\partial_{x_1}\frac{-x_2}{|x|^2}\,dx
        = \frac{1}{\pi}\int_{\mathbb{R}^2}\omega(x,t)\frac{x_1x_2}{|x|^4}\,dx
        \\ & \qquad= \frac{4}{\pi}\int_{x_1,x_2>0}f_M(x)\frac{X_1^t(x)X_2^t(x)}{|X^t(x)|^4}\,dx
        \\& \qquad \geq \frac{4}{\pi}\|\nabla X^t\|_{L^\infty(B_R(0))}^{-4}\int_{x_1,x_2>0}f_M(x)\frac{x_1x_2}{|x|^4}\,dx,
    \end{split}
\end{equation}
where we used \eqref{bounds on x in supp fM} to obtain 
\begin{equation*}
\begin{split}
&\frac{X_1^t(x)X_2^t(x)}{|X^t(x)|^4} = \frac{|X^t(x)|^{-2}}{\frac{X_1^t(x)}{X_2^t(x)}+\frac{X^t_2(x)}{X_1^t(x)}}\geq \frac{|X^t(x)|^{-2}}{ \left( \frac{x_1}{x_2} + \frac{x_2}{x_1}\right) \|\nabla X^t \|^2_{L^\infty(B_R(0))}} \geq \|\nabla X^t\|_{L^\infty(B_R(0))}^{-4}\frac{x_1x_2}{|x|^4},
\end{split}
\end{equation*}
for $x=(x_1,x_2)\in supp (f_M)$ with $x_1,x_2>0$ and $t\geq0.$  Denoting the integral $$I_M:=\int_{x_1,x_2>0}f_M(x)\frac{x_1x_2}{|x|^4}\,dx,$$
we have by \eqref{nabla u(0) lower bound}
$$\|\nabla X^t\|_{L^\infty(B_R(0))}\geq |\nabla X^t(0)|\geq\exp\left\{\frac{4}{\pi}I_M\int_0^t\|\nabla X^{s}\|_{L^\infty(B_R(0))}^{-4}\,ds\right\}.$$
Raising both sides to the fourth power and rewriting gives 
\begin{equation*}
\begin{split}
&\qquad \frac{d}{dt}\exp\left\{\frac{16}{\pi}I_M\int_0^t\|\nabla X^{s}\|^{-4}_{L^\infty(B_R(0))}\,ds\right\}\\
& = \frac{16}{\pi}I_M\|\nabla X^{t}\|^{-4}_{L^\infty(B_R(0))}\exp\left\{\frac{16}{\pi}I_M\int_0^t\|\nabla X^{s}\|^{-4}_{L^\infty(B_R(0))}\,ds\right\}
\leq \frac{16}{\pi}I_M.
\end{split}
\end{equation*}
Thus, $$\exp\left\{\frac{16}{\pi}I_M\int_0^t\|\nabla X^{s}\|^{-4}_{L^\infty(B_R(0))}\,ds\right\}\leq 1+\frac{16}{\pi }I_Mt,$$ which is equivalent to $$\int_0^t\|\nabla X^{s}\|^{-4}_{L^\infty(B_R(0))}\,ds\leq \frac{\pi\log\left(1+\frac{16}{\pi}I_Mt\right)}{16I_M \log e}.$$ This implies the existence of some constant $C>0$ independent of $M$ such that
\begin{equation}\label{nabla Xt lower bound}\max_{0\leq s\leq t}\|\nabla X^s\|_{L^\infty(B_R(0))}^4\geq \frac{CI_Mt}{\log(1+CI_Mt)},
\end{equation}
for any $t>0$.

We next estimate $I_M$. Let $$I:=\int_{x_1,x_2>0}\eta(x)\frac{x_1x_2}{|x|^4}\, dx>0.$$
Then, by a change of variables, 
\begin{equation}\label{preprop}
\begin{split}
    I_M=&\int_{x_1,x_2>0}f_M\frac{x_1x_2}{|x|^4}\,dx
=\frac{1}{\sqrt{M}\log M} \sum_{k=a_{\alpha,M}}^{b_{\alpha,M}}\frac{1}{k^{2\alpha}}\int_{x_1,x_2>0}\eta(2^kx)\frac{x_1x_2}{|x|^4}\,dx\\
&\qquad = \frac{1}{\sqrt{M}\log M} \sum_{k=a_{\alpha,M}}^{b_{\alpha,M}}\frac{1}{k^{2\alpha}}\int_{x_1,x_2>0}\eta(x)\frac{x_1x_2}{|x|^4}\,dx = \frac{I}{\sqrt{M}\log M} \sum_{k=a_{\alpha,M}}^{b_{\alpha,M}}\frac{1}{k^{2\alpha}}.
\end{split}
\end{equation}
Since $1/k^{2\alpha}$ is decreasing in $k$, we can write
\begin{equation}\label{sum lower bound}
\begin{split}
   \sum_{k=a_{\alpha,M}}^{b_{\alpha,M}}\frac{1}{k^{2\alpha}} &\geq  \int_{a_{\alpha,M}}^{b_{\alpha,M}}\frac{1}{k^{2\alpha}}\,dk\\
    &=\begin{cases}
   \frac{1}{1-2\alpha}\left(\left\lceil (2^M+M)^\frac{1}{1-2\alpha}\right\rceil^{1-2\alpha}-\left\lfloor 2^{\frac{M}{1-2\alpha}}\right\rfloor^{1-2\alpha} \right) \quad&
   0<\alpha<\frac{1}{2},
   \\ \log(2^M+M)-\log(M) &\alpha=\frac{1}{2},
   \end{cases}
   \\ \qquad &\geq C_\alpha M
\end{split}
\end{equation}
for some constant $C_\alpha>0$ depending on $\alpha$, but not $M$. Substituting this lower bound into (\ref{preprop}) gives
$$I_M=\int_{x_1,x_2>0}f_M\frac{x_1x_2}{|x|^4}\,dx \geq C_\alpha\frac{\sqrt{M}}{\log M}.$$
Thus, substituting this bound for $I_M$ into \eqref{nabla Xt lower bound} with $t=\frac{1}{\log M}$, we obtain
\begin{equation*}\label{largelagrangianM}
\begin{split}
&\max_{0\leq t\leq \frac{1}{\log M}}\|\nabla X^{t}\|_{L^{\infty}(B_R(0))} \geq \left(\frac{CC_\alpha\sqrt{M}}{\log^2 M\log\left(1+\frac{CC_\alpha\sqrt{M}}{\log^2 M}\right)}\right)^{1/4}\\
&\qquad\qquad = \log M\left( \frac{CC_{\alpha}\sqrt{M}}{\log^6 M \log \left(1+ \frac{CC_{\alpha}\sqrt{M} }{\log^2 M} \right)} \right)^{1/4}\gtrsim_\alpha\log M.
\end{split}
\end{equation*}
Finally choosing $M$ sufficiently large yields (iii).\\
\\
{\bf $f_M$ satisfies (ii)}  We now show that for sufficiently large $M$, $f_M$ satisfies (ii).  First note that since $a_{\alpha,M} \geq M$ for each $\alpha$, by a change of variables, $\|f_M\|_{L^1}$ satisfies
\begin{align}\label{fMsmallL1}
\begin{split}
    \|f_M\|_{L^1}&\leq \frac{1}{\sqrt{M}\log M}\sum_{k=a_{\alpha,M}}^{b_{\alpha,M}}\frac{1}{k^{2\alpha}}2^{-2k}\|\eta\|_{L^1} \leq \frac{2^{-M}\|\eta\|_{L^1}}{\sqrt{M}\log M}\sum_{k=a_{\alpha,M}}^{b_{\alpha,M}} 2^{-(k -M)} \lesssim  2^{-M}.
    \end{split}
\end{align}
Moreover, since $\eta(2^kx)\eta(2^jx)=0$ whenever $j\not=k,$
\begin{align}\label{fMsmallLp}
\begin{split}
 \|f_M\|_\infty&\leq \frac{1}{\sqrt{M}\log M},
    \end{split}
    \end{align}
so that $\|f_M\|_{\infty}$ can be made arbitrarily small for sufficiently large choice of $M$.  

To complete the proof that $f_M$ satisfies (ii), it remains to estimate $\| f_M\|_{H^{1,\alpha}}$.  We first use interpolation and the above estimates to write
\begin{equation}\label{fMsmallL2}
\begin{split}
&\| f_M \|_{L^2} \leq \| f_M \|^{1/2}_{\infty}\| f_M \|^{1/2}_{L^1} \lesssim 2^{-M/2}. 
\end{split}
\end{equation}
We once again use that $\eta(2^kx)\eta(2^jx)=0$ whenever $j\not=k$, and that 
\begin{equation}\label{sumtointegral}
\sum_{k=a_{\alpha,M}}^{b_{\alpha,M}}\frac{1}{k^{4\alpha}}<\sum_{k=a_{\alpha,M}}^{b_{\alpha,M}}\frac{1}{k^{2\alpha}}<\int_{a_{\alpha,M}-1}^{b_{\alpha,M}}\frac{1}{k^{2\alpha}}\,dk \lesssim_\alpha M,
\end{equation}
where the integral is computed in (\ref{sum lower bound}), to write
    \begin{align}\label{fMsmall}
        \|\nabla f_M \|_{L^2}  = \frac{1}{\sqrt{M}\log M}\left(\sum_{k=a_{\alpha,M}}^{b_{\alpha,M}}\frac{1}{k^{4\alpha}}\|\nabla \eta\|_{L^2}^2\right)^{\frac{1}{2}}\lesssim_\alpha \frac{1}{\log M}.
\end{align}
For the $H^{1,\alpha}$-seminorm of $f_M$, we have by (\ref{real variable definition for s=1}), 
\begin{align}\label{H1alphanormoffM}
   \notag M\log^2M|f_M|_{H^{1,\alpha}}^2 &\lesssim_\alpha \int_{\mathbb{R}^2}\int_{|x-y| \leq\frac{1}{2}}\frac{\left|\sum_{k=a_{\alpha,M}}^{b_{\alpha,M}}\frac{2^{k}}{k^{2\alpha}}\left(\nabla\eta(2^kx)-\nabla\eta(2^ky)\right)\right|^2}{|x-y|^2\log^{1-2\alpha}(|x-y|^{-1})}\, dy\, dx
   \\ & \lesssim_\alpha \sum_{k=a_{\alpha,M}}^{b_{\alpha,M}}\left( L_k+C_k \right),
\end{align} where
\begin{equation*}
    L_k:=\int_{\mathbb{R}^2}\int_{|x-y|\leq \frac{1}{2}}\frac{\frac{2^{2k}}{k^{4\alpha}}\left|\nabla\eta(2^k x)-\nabla\eta(2^ky)\right|^2}{|x-y|^2\log^{1-2\alpha}(|x-y|^{-1})}\,dy\,dx
\end{equation*}
and 
\begin{equation}\label{Ckdef}
    C_k:=2\sum_{j=a_{\alpha,M}}^k\frac{2^{j+k}}{(kj)^{2\alpha}}\int_{\mathbb{R}^2}\int_{|x-y|\leq \frac{1}{2}} \frac{\left|\nabla\eta(2^kx)-\nabla\eta(2^ky)\right|\left|\nabla\eta(2^jx)-\nabla\eta(2^jy)\right|}{|x-y|^2\log^{1-2\alpha}(|x-y|^{-1})}\, dy\, dx.
\end{equation}
To estimate $L_k$, we utilize the Fourier definition of the $H^{0,\alpha}$-seminorm and apply a change of variables to write
\begin{equation*}
\begin{split}
&L_k \lesssim_\alpha\frac{2^{2k}}{k^{4\alpha}}|\nabla \eta(2^k\cdot)|_{H^{0,\alpha}}^2\simeq_\alpha \frac{2^{2k}}{k^{4\alpha}}\int_{\mathbb{R}^2}\log^{2\alpha}(|\xi|+1)2^{-4k}|\widehat{\nabla\eta}(2^{-k}\xi)|^2 \, d\xi \\ &\qquad\qquad\simeq_\alpha \frac{1}{k^{4\alpha}}\int_{\mathbb{R}^2}\log^{2\alpha}(2^k|\xi|+1)|\widehat{\nabla\eta}(\xi)|^2 \, d\xi
    \\ &\qquad\qquad\lesssim_\alpha  \frac{1}{k^{4\alpha}}\left(\int_{\mathbb{R}^2}\left(\log^{2\alpha}(2^k)+\log^{2\alpha}(|\xi|+1)\right)|\widehat{\nabla\eta}(\xi)|^2 \, d\xi\right)
    \\  &\qquad\qquad\lesssim_\alpha  \frac{1}{k^{2\alpha}}\left(\|\nabla \eta\|_{L^2}^2+|\nabla\eta|_{H^{0,\alpha}}^2\right).
\end{split}
\end{equation*}
Thus, an application of (\ref{sumtointegral}) gives 
\begin{equation}\label{Halphanormliketerms}
    \sum_{k=a_{\alpha,M}}^{b_{\alpha,M}}L_k \lesssim_\alpha   M\|\nabla\eta\|_{H^{0,\alpha}}^2.
\end{equation}

To estimate the cross terms $C_k$, we utilize the the support of $\nabla\eta$ to manipulate the integrand.  First note that $j<k$ implies $\nabla\eta(2^kx)\nabla\eta(2^jx)=0$ for all $x\in \mathbb{R}^2$. Also, the support of $\nabla\eta$ ensures that $\nabla\eta(2^kx)\nabla\eta(2^jy)\not=0$ only if
\begin{equation*}
\begin{split}
&|x|<2^{-k}(\sqrt{2}+2^{-6}),\\
&|y|>2^{-j}(\sqrt{2}-2^{-6}).
\end{split}
\end{equation*}
Thus by the triangle inequality,
\begin{equation*}
\begin{split}
&|2^{-j}y-2^{-k}x|\geq 2^{-2j}(\sqrt{2}-2^{-6})-2^{-2k}(\sqrt{2}+2^{-6})\\
&\qquad
>2^{-2j+1/4}-2^{-2k+3/4}  = 2^{-2j-1/2}(2^{3/4} - 2^{-2(k-j) + 5/4}) 
\geq 2^{-2j-1/2},
\end{split}
\end{equation*}
where we used that $k-j\geq 1$.
Finally, by the same reasoning, if $j<k$ and $\nabla\eta(2^jx)\nabla(2^ky)\not=0$, interchanging the roles of $x$ and $y$ gives $|2^{-j}x-2^{-k}y|\geq 2^{-2j-1/2}.$ Thus, for $j<k$, a change of variables in $x$ and $y$ followed by a change of variables in $|x-y|$ gives
\begin{align*}
    &\qquad\int_{\mathbb{R}^2}\int_{|x-y|\leq \frac{1}{2}} \frac{\left|\nabla\eta(2^kx)-\nabla\eta(2^ky)\right|\left|\nabla\eta(2^jx)-\nabla\eta(2^jy)\right|}{|x-y|^2\log^{1-2\alpha}(|x-y|^{-1
    })}\, dy\, dx
    \\ &\leq \int_{\mathbb{R}^2}\int_{|x-y|\leq \frac{1}{2}} \frac{\left|\nabla\eta(2^kx)\nabla\eta(2^jy)\right|}{|x-y|^2\log^{1-2\alpha}(|x-y|^{-1})} \, dy\, dx+\int_{\mathbb{R}^2}\int_{|x-y|\leq \frac{1}{2}} \frac{\left|\nabla\eta(2^jx)\nabla\eta(2^ky)\right|}{|x-y|^2\log^{1-2\alpha}(|x-y|^{-1})} \, dy\, dx
    \\ &\qquad\leq2^{-2k-2j+1}\|\nabla\eta\|_\infty^2\int_{\text{supp}(\eta)} \left(\int_{2^{-2j-1/2}}^{\frac{1}{2}}\frac{2\pi}{h\log^{1-2\alpha}(h^{-1})}dh\right) \, dx
    \\ &\qquad =2^{-8-2k-2j}\pi^2\|\nabla\eta\|_\infty^2\left(\left(2j+\frac{1}{2}\right)^{2\alpha}-1\right)\lesssim_\alpha 2^{-2k-2j}j^{2\alpha}\|\nabla\eta\|_\infty^2.
\end{align*}
Substituting the above estimate into (\ref{Ckdef}) and summing over $k$, we conclude that
\begin{equation}\label{H1alphanormfMcrossterms}
\begin{split}
&\sum_{k=a_{\alpha,M}}^{b_{\alpha,M}}C_k \lesssim_\alpha\|\nabla\eta\|_\infty^2\sum_{k=a_{\alpha,M}}^{b_{\alpha,M}}\sum_{j=a_{\alpha,M}}^k\frac{1}{2^{k+j}k^{2\alpha}}\\
&\qquad \lesssim_\alpha\|\nabla\eta\|_\infty^2\sum_{k=a_{\alpha,M}}^{b_{\alpha,M}}\sum_{j=a_{\alpha,M}}^k\frac{1}{2^{k+j}} \lesssim_\alpha 2^{-2a_{\alpha,M}}\|\nabla\eta\|_\infty^2\lesssim_\alpha 4^{-M}\|\nabla\eta\|_\infty^2,
\end{split}
\end{equation}
where we again used that $a_{\alpha,M} \geq M$ for all $\alpha$ to obtain the last inequality.  Combining (\ref{H1alphanormoffM}), (\ref{Halphanormliketerms}), and (\ref{H1alphanormfMcrossterms}) gives 
\begin{equation}\label{H1alphanormfM}
    |\nabla f_M|_{H^{0,\alpha}}\lesssim_\alpha \frac{\|\eta\|_{H^{1,\alpha}}+\|\nabla\eta\|_\infty}{\log M}. 
\end{equation}
Finally, (\ref{fMsmallL2}), (\ref{fMsmall}), and (\ref{H1alphanormfM}) together imply that $\| f_M \|_{H^{1,\alpha}}$ can be made small for sufficiently large choice of $M$.  This completes the proof of Theorem \ref{largedeformationexistence}.   
\end{proof}
As a corollary of Theorems \ref{localnorminflationtheorem} and \ref{largedeformationexistence}, we have the existence of a solution to $(\ref{VorticityEquation})$ experiencing arbitrarily large growth in $H^{1,\alpha}$-norm.
\begin{cor}\label{local corollary}
    Let $\varepsilon>0$, $A>0$, and $\tau>0$ be arbitrary. Then there exists a function $\omega_0^{\varepsilon,\tau}\in C^\infty_c(\mathbb{R}^2)$ satisfying:\\
    \\ (i) $\|\omega_0^{\varepsilon,\tau}\|_\infty+\|\omega_0^{\varepsilon,\tau}\|_{H^{1,\alpha}}<\varepsilon$,
    \\ (ii) supp$(\omega_0^{\varepsilon,\tau})\subset B_1(0),$
    \\ (iii) The solution $\omega^{\varepsilon,\tau}$ of (\ref{VorticityEquation}) with initial data $\omega_0^{\varepsilon,\tau}$ satisfies $$\max_{0\leq t\leq \tau}\|\omega^{\varepsilon,\tau}(t)\|_{H^{1,\alpha}}>A.$$
\end{cor}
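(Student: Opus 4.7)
The plan is to combine the two preceding theorems directly: use Theorem~\ref{largedeformationexistence} to build initial data $\omega_0$ whose flow exhibits a very large Lagrangian deformation within $[0,\tau]$, then apply Theorem~\ref{localnorminflationtheorem} to perturb $\omega_0$ by a small oscillatory $\beta$ so that the $H^{1,\alpha}$-norm of the resulting solution is inflated by a power of this deformation.

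Concretely, given $\varepsilon, A, \tau > 0$, I would fix $\Gamma := \max\{(2A)^3, 2^{14}\}$ and let $\epsilon_0 > 0$ be a small parameter to be pinned down at the end. Applying Theorem~\ref{largedeformationexistence} with parameters $(\epsilon_0, \tau, \Gamma)$ produces $f \in C^\infty_c(\mathbb{R}^2)$ with the odd-odd symmetry, $\|f\|_\infty + \|f\|_{H^{1,\alpha}} < \epsilon_0$, and $\max_{0 \le t \le \tau}\|\nabla X^t\|_{L^\infty(B_R(0))} > \Gamma$, where $X^t$ is the flow generated by $f$ and $\text{supp}(f) \subset B_R(0)$. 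Inspection of the construction $f = f_M$ in that theorem shows $\text{supp}(f_M) \subset B_{2^{1-M}}(0) \subset B_1(0)$ for $M \ge 2$, so condition (ii) of the corollary is immediate. I then set $\omega_0 := f$, select a time $t_0 \in [0,\tau]$ at which $L := \|\nabla X^{t_0}\|_{L^\infty(B_R(0))} > 2^{13}$, and invoke Theorem~\ref{localnorminflationtheorem} to produce the perturbation $\beta$; the solution $\widetilde\omega$ with initial data $\omega_0^{\varepsilon,\tau} := \omega_0 + \beta$ then satisfies $\|\widetilde\omega(t_0)\|_{H^{1,\alpha}} > L^{1/3} > A$, establishing (iii).

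Condition (i) follows by combining (\ref{smallperturb}) with the elementary frequency-localization bound $|g|_{H^1}^2 \lesssim_\alpha \|g\|_{L^2}^2 + |g|_{H^{1,\alpha}}^2$, valid for any $g \in L^2$ because $\log^\alpha(|\xi|+1)$ is bounded away from zero on $\{|\xi| \ge 1\}$ when $\alpha > 0$. Applying this to $\beta$ together with $\|\beta\|_{L^p} \le \|f\|_{L^p}$ and $|\beta|_{H^{1,\alpha}} \le c_\alpha L^{-1/2}$ yields
\[\|\omega_0^{\varepsilon,\tau}\|_\infty + \|\omega_0^{\varepsilon,\tau}\|_{H^{1,\alpha}} \le 2\|f\|_\infty + 2\|f\|_{H^{1,\alpha}} + C_\alpha L^{-1/2} \le 2\epsilon_0 + C_\alpha \Gamma^{-1/2},\]
which drops below $\varepsilon$ once $\Gamma$ is taken large and $\epsilon_0$ small.

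The main obstacle is reconciling the time intervals: Theorem~\ref{localnorminflationtheorem} requires the maximum of $\|\nabla X^t\|_{L^\infty(B_{R_0}(0))}$ over $[0,1]$ to be attained at the very $t_0$ at which the inflation is exhibited, while the corollary demands the inflation occur within $[0,\tau]$. When $\tau \ge 1$ this is automatic, since $\max_{[0,\tau]} \ge \max_{[0,1]}$. When $\tau < 1$, I would either reprove Theorem~\ref{localnorminflationtheorem} on the interval $[0,\tau]$ in place of $[0,1]$ (all Grönwall and Sobolev constants in that argument depend on the interval only through multiplicative factors that remain bounded for $\tau \le 1$), or apply the scaling $\omega_0 \mapsto \lambda \omega_0$ with $\lambda = 1/\tau$, which reparametrizes the flow as $X^{t/\tau}$ and moves the inflation time inside $[0,\tau]$; the factor $\lambda$ that appears in the $H^{1,\alpha}$-norm is absorbed by choosing $\epsilon_0$ correspondingly smaller.
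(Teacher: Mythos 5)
Your proposal is correct and follows the same route as the paper: apply Theorem~\ref{largedeformationexistence} to produce $f$ with small $H^{1,\alpha}$-norm and large Lagrangian deformation, then feed $f$ into Theorem~\ref{localnorminflationtheorem} and take $\omega_0^{\varepsilon,\tau} = f + \beta$. Two small remarks. First, a presentational issue: you fix $\Gamma := \max\{(2A)^3, 2^{14}\}$ at the outset but then remark at the end that $\Gamma$ is ``taken large'' so that $C_\alpha\Gamma^{-1/2} < \varepsilon$; to avoid this circularity, simply build the $\varepsilon$-dependence into $\Gamma$ from the start, as the paper does with $\Gamma = \max\{2^{13}, c_\alpha^2/\varepsilon^2, A^3\}$. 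Second, your last paragraph flags a point the paper's proof does not spell out: Theorem~\ref{localnorminflationtheorem} locates the inflation at the argmax $t_0$ of $\|\nabla X^t\|_{L^\infty}$ over $[0,1]$, whereas the corollary demands inflation somewhere in $[0,\tau]$, and there is no \emph{a priori} guarantee $t_0 \le \tau$. Your fix --- rerun the proof of Theorem~\ref{localnorminflationtheorem} with $[0,1]$ replaced by $[0,\tau]$ (which is painless since $\tau \le 1$ only improves the Gr\"onwall bounds), or use the time-dilation scaling $\omega_0 \mapsto \tau^{-1}\omega_0$ at the cost of shrinking $\epsilon_0$ --- is exactly the right way to close this. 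It is the implicit reading the authors intend, and your explicit treatment makes the argument tighter than the text as written.
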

\begin{proof}
      Let $c_{\alpha}$ be as in Theorem \ref{localnorminflationtheorem}.  It follows from Theorem \ref{largedeformationexistence} with $\Gamma=\max \{2^{13}, c^2_{\alpha}/{\varepsilon}^2, A^3 \}$ that there exists $f=\omega_0$  satisfying conditions (i) and (ii) of Theorem \ref{localnorminflationtheorem}.  Thus, by Theorem \ref{localnorminflationtheorem},  there exists $\beta$ satisfying (\ref{smallperturb}), and, in particular, $$|\beta|_{H^{1,\alpha}} \leq \frac{c_{\alpha}}{\sqrt{L}} \leq \frac{c_{\alpha}}{\sqrt{\Gamma}} \leq \varepsilon.$$  Set $\omega_0^{\varepsilon,\tau} = f + \beta$.  Then $\omega_0^{\varepsilon,\tau}$ satisfies conditions (i) and (ii) of Corollary \ref{local corollary}; specifically, 
    $$\|\omega_0^{\varepsilon,\tau}\|_\infty +\|\omega_0^{\varepsilon,\tau}\|_{H^{1,\alpha}}< 4\varepsilon$$ and supp$(\omega_0^{\varepsilon,\tau}) \subset B_1(0)$.  Moreover, it follows from Theorem \ref{localnorminflationtheorem} and the definition of $\Gamma$ that $\omega_0^{\varepsilon,\tau}$ satisfies condition (iii). 
\end{proof}
\section{Interaction of Vorticity Patches and Proof of Theorem \ref{main}}\label{Section Proof of main theorem}
In this section, we prove Theorem \ref{main}.  Our first step is to use Corollary \ref{local corollary} to construct an infinite collection of vorticity patches, each with small $H^{1,\alpha}$-norm.  We construct this collection in such a way that the infinite sum of these patches yields a non-compactly supported initial vorticity $\omega_0\in H^{1,\alpha}(\mathbb{R}^2)$ whose corresponding solution exhibits blow-up in $H^{1,\alpha}$-norm at positive times. In order to control the interaction of these patches, we rely on the following proposition.

\begin{prop}[\cite{bourgainli15}, Proposition 5.3]\label{BL interaction prop} Let $\{\omega_0^j\}_{j=1}^\infty$ satisfy $\omega_0^j \in C^\infty_c(\mathbb{R}^2)$ and supp$(\omega_0^j)\subset B_1(0)$ for each $j\in\mathbb{N}$, and let $\{\omega_0^j\}_{j=1}^\infty$ be such that
\begin{equation}\label{sum of omegaj less than c1}
    \sum_{j=1}^\infty\left(\|\omega_0^j\|_{H^1}^2+\|\omega_0^j\|_{L^1}\right)+\sup_{j\geq 1}\|\omega_0^j\|_\infty<1.
\end{equation}
Then there exists a sequence of points $\{x_j\}$ in $\mathbb{R}^2$ such that the following hold:\\
\\(1) For $k\not=j$, $$B_{64}(x_j)\cap B_{64}(x_k)=\emptyset.$$
\\(2) If we define $$\omega_0(x)=\sum_{j=1}^\infty\omega_0^j(x-x_j),$$
then $\omega_0\in L^1 \cap C^\infty(\mathbb{R}^2)$.
\\ (3) If $\omega$ is the solution to $$\begin{cases}
    \partial_t\omega+\nabla^\perp(-\Delta)^{-1}\omega\cdot\nabla\omega=0 \quad &\text{on }\mathbb{R}^2\times (0,\infty),
    \\ \omega|_{t=0}=\omega_0 \quad &\text{on }\mathbb{R}^2,
\end{cases}$$ 
then for all $t\in[0,1]$, $\omega(\cdot,t)\in L^1\cap C^\infty(\mathbb{R}^2)$ and 
$$\text{supp}\left(\omega(\cdot,t)\right)\subset\bigcup_{j=1}^\infty B_4(x_j).$$
\\ (4) If we denote by $\omega_j$ the solution to (\ref{VorticityEquation}) with initial data $\omega_0^j(\cdot-x_j)$, i.e. $$\begin{cases}
    \partial_t\omega_j+\nabla^\perp(-\Delta)^{-1}\omega_j\cdot\nabla\omega_j=0 \quad &\text{on }\mathbb{R}^2\times (0,\infty),
    \\ \omega_j(x,0)=\omega_0^j(x-x_j) \quad &\text{on }\mathbb{R}^2,
\end{cases}$$ 
then $$\lim_{j\to \infty}\max_{0\leq t\leq 1}\|\omega_j(\cdot,t)-\omega(\cdot,t)\|_{H^2(B_4(x_j))}=0.$$
    
\end{prop}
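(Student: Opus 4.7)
The plan is to choose the translates $\{x_j\}$ inductively so that the patches are essentially decoupled, then leverage finite propagation speed together with a Lagrangian continuous-dependence argument. The key a priori input is a uniform bound on the full velocity: since the patches will occupy disjoint balls $B_1(x_j)$, the hypothesis \eqref{sum of omegaj less than c1} yields $\|\omega_0\|_{L^1}<1$ and $\|\omega_0\|_{L^\infty}<1$, both of which are conserved along the Yudovich flow. Splitting the Biot-Savart kernel at the scale $r=(\|\omega\|_{L^1}/(2\pi\|\omega\|_{L^\infty}))^{1/2}$ then gives
\[
  \|u(\cdot,t)\|_{L^\infty} \leq \sqrt{\tfrac{2}{\pi}\|\omega_0\|_{L^1}\|\omega_0\|_{L^\infty}} < 1, \qquad t\geq 0,
\]
so every trajectory moves less than unit distance on $[0,1]$.

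To build the centers I would fix a rapidly increasing sequence $R_n\uparrow\infty$ with $R_n\geq 128$ and select $x_1,x_2,\dots$ greedily so that $|x_{n+1}-x_k|\geq R_n$ for all $k\leq n$. The separation $R_n\geq 128$ gives (1) at once. For (2), disjointness of the $B_1(x_j)$ makes $\omega_0$ locally a single smooth bump, hence $C^\infty$, and the summability $\sum_j\|\omega_0^j\|_{L^1}<1$ gives $\omega_0\in L^1$. For (3), trajectories starting in $\bigcup_j B_1(x_j)$ remain within distance $1$ of their starting points on $[0,1]$, so $\mathrm{supp}(\omega(\cdot,t))\subset\bigcup_j B_2(x_j)\subset\bigcup_j B_4(x_j)$; smoothness is preserved by standard propagation of regularity for smooth data.

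For (4), I would argue Lagrangianly. Writing $X^t$ and $X^t_j$ for the flows of $u$ and $u_j=K*\omega_j$, both $\omega$ and $\omega_j$ are transports of their initial data. For $x\in B_4(x_j)$, the unit-speed bound forces $X^{-t}(x)\in B_5(x_j)$, a region disjoint from every other $B_1(x_k)$ once $R_n$ is moderately large; hence $\omega_0(X^{-t}(x))=\omega_0^j(X^{-t}(x)-x_j)$, and the difference reduces to a flow-map difference
\[
  \omega(x,t)-\omega_j(x,t)=\omega_0^j\!\bigl(X^{-t}(x)-x_j\bigr)-\omega_0^j\!\bigl(X_j^{-t}(x)-x_j\bigr),
\]
bounded by $\|\nabla\omega_0^j\|_\infty\,|X^{-t}(x)-X_j^{-t}(x)|$. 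A Gr\"onwall argument then estimates the flow-map difference by $\|u-u_j\|_{L^\infty(B_5(x_j))}$ integrated in time, and on $B_5(x_j)$ the far-field perturbation $(u-u_j)(\cdot,t)=\sum_{k\neq j}K*\omega^{(k)}(\cdot,t)$ evaluated at distance $\geq R_{\min(j,k)}-5$ from each $B_2(x_k)$ is controlled by $\sum_{k\neq j}\|\omega_0^k\|_{L^1}/(R_{\min(j,k)}-5)$. Choosing $R_n$ to grow fast enough (e.g.\ $R_n=n\cdot 2^n$) forces this tail to zero as $j\to\infty$, yielding $L^\infty$ convergence of $\omega_j-\omega$ on $B_4(x_j)$. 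The analogous control of higher-order spatial derivatives of the flow-map difference, combined with uniform $H^m$-bounds on $\omega$ and $\omega_j$ on $B_4(x_j)\times[0,1]$ from persistence of smoothness, upgrades the estimate to $H^2(B_4(x_j))$-convergence, if necessary via interpolation between $L^2$-convergence and a uniform $H^3$-bound.

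The main obstacle is (4): the global Eulerian difference $\omega-\omega_j$ is not small, since $\omega$ contains all the other transported patches that $\omega_j$ ignores, so one cannot simply run Gr\"onwall on $\omega-\omega_j$ globally. The argument must be localized to $B_4(x_j)$, which is exactly what finite propagation speed together with the Lagrangian reformulation accomplishes; the main technical care goes into tracking enough derivatives of the flow-map difference to reach $H^2$ rather than just $L^2$, and into calibrating the growth rate of $R_n$ so that the pairwise tail sums $\sum_{k\neq j}\|\omega_0^k\|_{L^1}/|x_k-x_j|$ vanish as $j\to\infty$.
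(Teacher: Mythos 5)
The paper does not prove this proposition; it cites Proposition 5.3 of \cite{bourgainli15} verbatim, so there is no in-paper proof to compare against. Your sketch does capture the structure of the Bourgain--Li argument: the conserved $L^1\cap L^\infty$ bound gives sub-unit propagation speed, a greedy separation of centers makes cross-patch Biot--Savart interactions decay, and interpolation promotes the resulting $L^2$-convergence on $B_4(x_j)$ to $H^2$.

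Two steps as written need tightening. First, the identity $(u-u_j)(\cdot,t)=\sum_{k\neq j}K*\omega^{(k)}(\cdot,t)$ on $B_5(x_j)$ is false for $t>0$: writing $\omega(\cdot,t)=\sum_k\omega^{(k)}(\cdot,t)$ with $\omega^{(k)}$ the piece of $\omega$ supported in $B_4(x_k)$, the correct decomposition is $u-u_j = K*\bigl(\omega^{(j)}-\omega_j\bigr) + \sum_{k\neq j}K*\omega^{(k)}$, and the first term is not zero, since the $j$-th patch inside the full flow is transported by $u$ while $\omega_j$ is transported by $u_j$. That self-interaction term is precisely what forces the Gr\"onwall bootstrap you allude to (flow discrepancy controls the near-field velocity discrepancy, which feeds back into the flow discrepancy), and the constants in the closed estimate involve $\|\nabla\omega_0^j\|_\infty$ and the Lipschitz constant of $u_j$ on $[0,1]$, neither of which is bounded uniformly in $j$ by \eqref{sum of omegaj less than c1}. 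Second, for the same reason, a data-independent choice such as $R_n=n\cdot 2^n$ is insufficient: the separation must be chosen adaptively, after seeing the $\omega_0^j$, so that the far-field tail $\sum_{k\neq j}\|\omega_0^k\|_{L^1}/|x_k-x_j|$ vanishes fast enough to defeat the $j$-dependent Gr\"onwall constants and the $j$-dependent $H^3$-bounds you interpolate against; otherwise the interpolation step does not deliver $H^2$-convergence in (4). Since the greedy construction allows each $R_n$ to be taken arbitrarily large, both issues are repairable, but they must be addressed explicitly for the argument to close.
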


\begin{proof}[Proof of Theorem \ref{main}]
We first apply Corollary \ref{local corollary} to obtain a sequence of functions $\{\omega_0^j\}_{j=1}^\infty$ such that for every $j\in\mathbb{N}$, $\omega^j_0 \in C^{\infty}_c\left(B_1(0)\right)$ and the following hold: 
\begin{equation}\label{proof of main theorem}
    \begin{split}
&\|\omega_0^j\|_{L^\infty}+\|\omega_0^j\|_{L^1}+\|\omega_0^j\|_{H^{1,\alpha}}^2<2^{-(j+1)},
        \\ &\max_{0\leq t\leq \frac{1}{j}}\|\omega_j\|_{H^{1,\alpha}}>j,
    \end{split}
\end{equation}
where $\omega_j$ solves $(\ref{VorticityEquation}$) with initial data $\omega_0^j$. Thus, $\{ \omega_0^j\}$ satisfy the hypotheses of Proposition \ref{BL interaction prop}, which gives us a sequence $\{x_j\}_{j=1}^\infty$ in $\mathbb{R}^2$ such that the solution $\omega$ to $$\begin{cases}
    \partial_t\omega+u\cdot\nabla\omega=0 \quad &\text{on }\mathbb{R}^2\times (0,\infty),
    \\ u= \nabla^\perp(-\Delta)^{-1}\omega\quad &\text{on }\mathbb{R}^2\times[0,\infty),
    \\ \omega|_{t=0}=\sum_{j=1}^\infty\omega_0^j(\cdot-x_j) \quad &\text{on }\mathbb{R}^2,
\end{cases}$$ 
satisfies $$\lim_{j\to\infty}\max_{0\leq t\leq \frac{1}{j}}\|\omega(\cdot,t)-\omega_j(\cdot,t)\|_{H^2(B_{4}(x_j))}=0.$$ Because $\|f\|_{H^{1,\alpha}}\lesssim_\alpha\|f\|_{H^2}$ for all $f\in H^{2}(\mathbb{R}^2)$, it follows from (\ref{proof of main theorem}) that $$\limsup_{t\to0^+}\|\omega(\cdot,t)\|_{H^{1,\alpha}}=\infty.$$ Clearly, this gives $\limsup_{t\to0^+}\|u(\cdot,t)\|_{H^{2,\alpha}}=\infty$, which concludes the proof.
\end{proof}

\section*{Acknowledgments} \noindent EC is grateful to the Simons Foundation
for support through Grant 429578.  

\Obsolete{
\section{Discussion}\label{section final discussion}

In this section we conclude with some remarks on the authors' plans for future work.
\\

To begin with, the problem of extending the Theorem \ref{main} to even more regular, similarly defined spaces lies only in the construction leading to Corollary \ref{local corollary} and in working with a suitable interpolation inequality analogous to \eqref{log sobolev interpolation inequality}. Specifically, for $\psi:[0,\infty)\to[0,\infty)$ we may define the generalized Sobolev spaces $$H^{1,\psi}(\mathbb{R}^2):=\left\{f\in L^2(\mathbb{R}^2):\|f\|_{H^{1,\psi}}:=\|f\|_{L^2}+|f|_{H^{1,\psi}}<\infty\right\},$$ where $$|f|_{H^{1,\psi}}:=\left(\int_{\mathbb{R}^2}|\xi|^2\psi^2(|\xi|)\widehat{f}(\xi)\, d\xi\right)^\frac{1}{2}.$$
In fact, the second and third authors work with these spaces and more in the Besov and Triebel-Lizorkin setting in \textcolor{blue}{CITE OTHER PAPER} to investigate well-posedness of the Euler equations. The key condition on $\psi$ to guarantee global well-posedness in $H^{1,\psi}(\mathbb{R}^2)$ seems to be
\begin{equation}\label{varphi key condition}
    \int_1^\infty \frac{1}{t\psi^2(t)}\, dt<\infty
\end{equation}
This condition is motivated by the fact that it is precisely the condition for which $f\in H^{2,\psi}(\mathbb{R}^2)$ guarantees $\nabla \nabla^\perp(-\Delta)^{-1}f\in L^\infty(\mathbb{R}^2)$ and so we have $$\|\nabla u\|_\infty\leq C\|\omega\|_{H^{1,\psi}},$$ where the constant $C>0$ depends only on $\psi$. 
\\

This leaves open the question of what occurs in the case where $\int_1^\infty\frac{1}{t\psi^2(t)}\, dt=\infty.$ As can be checked, the proof of Theorem \ref{localnorminflationtheorem} can be modified to replace $H^{1,\alpha}(\mathbb{R}^2)$ with $H^{1,\psi}(\mathbb{R}^2)$. So, too, can the final proof of the Theorem \ref{main} using Proposition \ref{BL interaction prop}. The only instance where the logarithm-based refinement was necessary was in the construction.
\\

The authors intend to treat this more general case in future work together with the compact problem --- that is, prove the Theorem \ref{main} with velocity in $H^{2,\psi}(\mathbb{R}^2)$ constructed such that the vorticity is also compactly supported.
\\
}

\bibliographystyle{alpha}
\bibliography{illposedbibliography}

\begin{thebibliography}{CNSS17}

\bibitem[BBC16]{bohunbouchutcrippa16}
Anna Bohun, Fran{\c{c}}ois Bouchut, and Gianluca Crippa.
\newblock Lagrangian solutions to the {2D} {Euler} system with {$L^1$} vorticity and infinite energy.
\newblock {\em Nonlinear Anal.}, 132:160--172, 2016.

\bibitem[BCD11]{bahourichemindanchin11}
Hajer Bahouri, Jean-Yves Chemin, and Rapha\"el Danchin.
\newblock {\em Fourier {A}nalysis and {N}onlinear {P}artial {D}ifferential {E}quations}.
\newblock Springer, 2011.

\bibitem[BKM84]{bealekatomajda84}
J~Thomas Beale, Tosio Kato, and Andrew Majda.
\newblock Remarks on the breakdown of smooth solutions for the {3-D} {Euler} equations.
\newblock {\em Comm. Math. Phys.}, 94(1):61--66, 1984.

\bibitem[BL15]{bourgainli15}
Jean Bourgain and Dong Li.
\newblock Strong ill-posedness of the incompressible {Euler} equation in borderline {S}obolev spaces.
\newblock {\em Inventiones mathematicae}, 201:97--157, 2015.

\bibitem[BL21]{bourgainli21}
Jean Bourgain and Dong Li.
\newblock Strong ill-posedness of the {3D} incompressible {Euler} equation in borderline spaces.
\newblock {\em International Mathematics Research Notices}, 2021(16):12155--12264, 2021.

\bibitem[BN19]{bruenguyen19}
Elia Bru{\'e} and Quoc-Hung Nguyen.
\newblock On the {Sobolev} space of functions with derivative of logarithmic order.
\newblock {\em Advances in Nonlinear Analysis}, 9(1):836--849, 2019.

\bibitem[Cha02]{chae02}
Dongho Chae.
\newblock On the well-posedness of the {Euler} equations in the {Triebel-Lizorkin} spaces.
\newblock {\em Communications on Pure and Applied Mathematics: A Journal Issued by the Courant Institute of Mathematical Sciences}, 55(5):654--678, 2002.

\bibitem[Cha03]{chae03}
Dongho Chae.
\newblock On the {Euler} equations in the critical {Triebel-Lizorkin} spaces.
\newblock {\em Archive for rational mechanics and analysis}, 170(3):185--210, 2003.

\bibitem[Cha04]{chae04}
Dongho Chae.
\newblock Local existence and blow-up criterion for the {Euler} equations in the {Besov} spaces.
\newblock {\em Asymptotic Analysis}, 38(3-4):339--358, 2004.

\bibitem[CNSS17]{crippaseisspirito17}
Gianluca Crippa, Camilla Nobili, Christian Seis, and Stefano Spirito.
\newblock Eulerian and {Lagrangian} solutions to the continuity and {Euler} equations with {$L^{1}$} vorticity.
\newblock {\em SIAM J. Math. Anal.}, 49(5):3973--3998, 2017.

\bibitem[EJ17]{elgindijeong17}
Tarek~Mohamed Elgindi and In-Jee Jeong.
\newblock Ill-posedness for the incompressible {Euler} equations in critical {Sobolev} spaces.
\newblock {\em Annals of PDE}, 3:1--19, 2017.

\bibitem[HR25]{harrisonradke25}
Nicholas Harrison and Zachary Radke.
\newblock Well-posedness for the {Euler} equations in function spaces of generalized smoothness.
\newblock {\em arXiv preprint https://arxiv.org/abs/2510.02626}, 2025.

\bibitem[JK21]{jeongkim21}
In-Jee Jeong and Junha Kim.
\newblock A simple proof of ill-posedness for incompressible {Euler} equations in critical {Sobolev} spaces.
\newblock {\em arXiv preprint arXiv:2111.14078}, 2021.

\bibitem[KP86]{katoponce86}
Tosio Kato and Gustavo Ponce.
\newblock Well-posedness of the {Euler} and {Navier-Stokes} equations in the {Lebesgue} spaces {$L^p_s(\mathbb{R}^2)$}.
\newblock {\em Rev. Mat. Iberoam.}, 2:73--88, 1986.

\bibitem[Kwo21]{kwon21}
Hyunju Kwon.
\newblock Strong ill-posedness of logarithmically regularized {2D} {Euler} equations in the borderline {Sobolev} space.
\newblock {\em Journal of Functional Analysis}, 280(7):108822, 2021.

\bibitem[PH23]{hwangpak23}
Hee~Chul Pak and Jun~Seok Hwang.
\newblock Persistence of the solution to the {Euler} equations in the end-point critical {Triebel-Lizorkin} space ${F}^{d+1}_{1,\infty}(\mathbb{R}^d)$.
\newblock {\em arXiv preprint arXiv:2302.13295}, 2023.

\bibitem[SM93]{stein93}
Elias~M Stein and Timothy~S Murphy.
\newblock {\em Harmonic analysis: real-variable methods, orthogonality, and oscillatory integrals}, volume~3.
\newblock Princeton University Press, 1993.

\bibitem[Vis98]{vishik98}
Misha Vishik.
\newblock Hydrodynamics in {Besov} spaces.
\newblock {\em Archive for Rational Mechanics and Analysis}, 145(3):197--214, 1998.

\bibitem[Wol33]{wolibner33}
Witold Wolibner.
\newblock Un theor{\`e}me sur l'existence du mouvement plan d'un fluide parfait, homog{\`e}ne, incompressible, pendant un temps infiniment long.
\newblock {\em Math. Z.}, 37:698--726, 1933.

\bibitem[Yud63]{yudovich63}
Victor~I Yudovich.
\newblock Non-stationary flow of an ideal incompressible liquid.
\newblock {\em U.S.S.R. Comput. Math. and Math. Phys.}, 3(6):1407--1456, 1963.

\end{thebibliography}

\end{document}